\newcommand{\lag}{\mathcal{L}}
\newcommand{\qu}{\mathrm{q}}
\newcommand{\pe}{\mathrm{p}}
\newtheorem{theorem}{Theorem}[section]
\newtheorem{corollary}[theorem]{Corollary}
\newtheorem{lemma}[theorem]{Lemma}
\newtheorem{prop1}[theorem]{Proposition}
\newtheorem{remark}{Remark}[section]
\newtheorem{definition}{Definition}[section]
\newenvironment{proof}{{ Proof. }}{\hbox{~} \hfill \rule{0.5em}{0.5em}\\}
	\numberwithin{equation}{section}
\begin{document}
		\title{ Topological Derivative for Shallow Water Equations
}         
\date{} 

\maketitle
\centerline{\scshape Mame Gor Ngom \footnote{mamegor.ngom@uadb.edu.sn}}
\medskip
{\footnotesize
\centerline{Universit\'e Alioune Diop de Bambey, }
\centerline{  Ecole Doctorale des Sciences et Techniques et Sciences de la Soci\'et\'e. }
\centerline{Laboratoire de Math\'ematiques de la D\'ecision et d'Analyse Num\'erique}
\centerline{ (L.M.D.A.N) F.A.S.E.G)/F.S.T. }
}

\medskip
\centerline{\scshape Ibrahima Faye \footnote{ibrahima.faye@uadb.edu.sn},  Diaraf Seck \footnote{diaraf.seck@ucad.edu.sn }}
\medskip
{\footnotesize
\centerline{Universit\'e Alioune Diop de Bambey, UFR S.A.T.I.C, BP 30 Bambey (S\'en\'egal),}
\centerline{  Ecole Doctorale des Sciences et Techniques et Sciences de la Soci\' et\'e. }
\centerline{Laboratoire de Math\'ematiques de la D\'ecision et d'Analyse Num\'erique}
\centerline{ (L.M.D.A.N). }
} 

\pagestyle{myheadings}
\renewcommand{\sectionmark}[1]{\markboth{#1}{}}
\renewcommand{\sectionmark}[1]{\markright{\thesection\ #1}}
\begin{abstract}\noindent 
Coastal erosion is a major and growing environmental problem that describes the movement of sand caused by tides, waves or currents. Several phenomena contribute to the significant advance of the sea. These include climate change, with rising sea levels due to the melting of ice at the Earth's poles, the amplification of the tidal effect, leading to the transport of large masses of sand, storms, etc. We contribute to this problem by using topological shape optimization techniques applied to a PDE model describing coastal erosion. We use shallow water equations as a model.		
\end{abstract}
\textbf{Keywords}: Min-max method, topological derivative, shallow water equation.

\section{Introduction}
Coastal erosion is a phenomenon that has existed since time immemorial. It is defined as the loss of material due to the advance of the sea along the coast. There are many causes, but in general we can point to an increase in ocean height, and an acceleration in the transport of sand from beaches to oceans by increasingly powerful waves. For several decades now, scientists have been studying these phenomena, and a number of questions have arisen concerning coastal erosion: how can we improve the devices built to mitigate coastal erosion by acting on their shape? and where should they be installed?  Engineers, for their part, generally recommend the installation of dikes and dams to prevent or slow down coastal erosion phenomena, based on a study of the minimum action linked to the geometry of the fields. All these efforts produce interesting results, but they do not fully satisfy the community interested in these issues. For us, the study of topological or geometrical analysis tools of optimal shapes, and in particular the topological shape optimization for partial differential equations (PDE) modeling coastal erosion dynamics is interesting. We limit ourselves here to the topological shape optimization by focusing on the theoretical aspects. Coastal erosion can be modeled by several types of partial differential equations.  However we prefer to use the shallow water equations (SWEs) also known as Saint Venant equation. SWEs are a system of partial differential equations derived from the Navier-Stokes equations, that describe the motion of Newtonian fluids. They are used in many domains. For example, in river hydraulics, the two-dimensional SWEs can be used as a river model. This was studied by M. Honnorat in his PhD thesis \cite{honnorat2007assimilation}. As a reminder, river hydraulics is a science that enables the understanding and development of natural or artificial free-surface channels \cite{gharbi2016etudes}. Recently, L. Schlegel and V. Schulz have done interesting work in \cite{lukaswe} on shape optimization for the mitigation of coastal erosion using shallow water equations. Also SWEs are often used to model the flow of oceanographic and atmospheric fluids. Models of such systems can be used to predict areas potentially affected by pollution, coastal erosion, melting polar ice caps, etc. In the mathematical model, the flow is represented by a state variable which is the solution of the SWEs. Therefore the main objective of this paper is to provide theoretical results for optimality conditions of shape functionals under the constraints of the shallow water equation. To do this, we will rely mainly on the work of Delfour \cite{Delfour1} and Gangl and Sturm \cite{ganlsimplified}\cite{ganglautomated}. These are very efficient methods for computing topological derivatives. And the advantage is that they can be applied to many classes of PDEs. The paper is organized as follows: in Section \ref{sectpdecons}, we formulate the PDE-constrained optimization problem. In Section \ref{sectderivation}, we derive the ingredients needed to provide solutions to this problem. Section \ref{sectdertopo} presents the computation of the topological derivative, the derivation of the adjoint equations with the min-max approach. The last part, Section \ref{secttopogangl}, will otherwise give the results of topological derivative in the case of a shallow water equation including a viscosity using the Gangl and Sturm method \cite{ganglautomated}.
\section{ PDE-constrained topology optimization}\label{sectpdecons}
The shallow water model describes the evolution of a fluid by means of three equations: the continuity equation which translates the conservation of mass and the two momentum equations. These equations are obtained by integrating the Navier-Stokes equations over the vertical or simply over the entire depth with a hydrostatic approximation, i.e. neglecting the vertical acceleration. This approximation is necessary to translate the pressure into water height. However there are several variants of this model and we limit ourselves to the following formulation \cite{Bouh-Biza}:
\begin{eqnarray}\label{equaSV}
\frac{\partial U}{\partial t}+ F(U,\psi) = 0\;\;\;\Omega\times[0,T]
\end{eqnarray}
$U:\Omega\times [0,T[\rightarrow \mathbb{R}^3$ and $F$ are respectively given by
\begin{eqnarray}\label{Grandsyst}
U=(h,hu,hv)\;\;\;\text{and}\;\;\;F(U,\psi)=\left(\begin{array}{ccc}
	\frac{\partial (hu)}{\partial x}+\frac{\partial (hv)}{\partial y}\\ \frac{\partial (hu^2+\frac{g}{2}h^2)}{\partial x}+\frac{\partial (huv)}{\partial y}+gh\frac{\partial\psi}{\partial x}\\ \frac{\partial (huv)}{\partial x}+\frac{\partial (hv^2+\frac{g}{2}h^2)}{\partial y}+gh\frac{\partial\psi}{\partial y}
\end{array}\right)
\end{eqnarray} 
with initial and boundary conditions that we will define. Here $\Omega$ is a regular open domain in $\mathbb{R}^2$, $t$ is the time variable and $x$ and $y$ are the two-dimensional space variables. $u,\,v:\Omega\times[0,T[\rightarrow \mathbb{R}^+$ respectively represent the vertical and horizontal velocity of the water, $h$ is the height of the water and $g$ is the gravity and $\psi:\mathbb{R}^2\rightarrow \mathbb{R}$  is the topography of the bottom. The effects of bottom friction and the frictional forces of atmospheric pressure are neglected. The PDE-constrained topology optimization problem reads:
\begin{eqnarray*}
\min_\Omega 	&\;&J(\Omega)=\int_0^T\int_\Omega\vert\nabla U\vert^2\;dxdt+\int_0^T\int_\Omega\vert U-U_d\vert^2\;dxdt,\\[0.2cm]
&\,&s.t.\qquad\frac{\partial U}{\partial t}+F(U,\psi)=0\;\;\Omega\times[0,T]
\end{eqnarray*} 
The initial and boundary conditions are defined in the section \ref{sectdertopo} after the variational formulation of the problem is performed. In the next section, we will detail some of the ingredients we need to perform topological derivative calculations using the min-max approach.
\section{Derivation of the topological derivative}\label{sectderivation}
\subsection{Notations and definitions}
\begin{definition}
A Lagrangian function is a function of the form
\begin{eqnarray*}
	(s,x,y)\mapsto \lag(s,x,y):[0,\tau]\times X\times Y\rightarrow\mathbb{R}\;\;\;\;\tau >0
\end{eqnarray*}
where $X$ is a vector espace, $Y$ a non empty subset of vector space and the function $y\mapsto \lag(s,x,y)$ is affine.
\end{definition}
Associate with the parameter $s$ the parametrized minimax
\begin{eqnarray*}
s\mapsto \textrm{g}(s)=\inf_{x\in X}\sup_{y\in Y}\lag(s,x,y):[0,\tau]\rightarrow\mathbb{R}\;\;\;\text{and} \;\;\textrm{dg}(0)=\lim_{s\to 0^+}\frac{\textrm{g}(s)-\textrm{g}(0)}{s}.
\end{eqnarray*}
When the limits exist, we will use the following notations
\begin{eqnarray*}
d_s\lag(0,x,y)=\lim_{s \to 0^+}\frac{\lag(s,x,y)-\lag(0,x,y)}{s}\\\varphi\in X,\;\;d_x\lag(s,x,y;\varphi)=\lim_{\theta \to 0^+}\frac{\lag(s,x+\theta\varphi,y)-\lag(s,x,y)}{\theta}\\\phi\in Y\;\;d_y\lag(s,x,y;\phi)=\lim_{\theta \to 0^+}\frac{\lag(s,x,y+\theta\phi)-\lag(s,x,y)}{\theta}.
\end{eqnarray*}
Since $\lag(s,x,y)$ is affine en $y$, for all $(s,x)\in [0,\tau]\times X$,
\begin{eqnarray}
\forall\;y,\psi\in Y\;\;d_y\lag(s,x,y;\psi)=\lag(s,x,\psi)-\lag(s,x,0)=d_y\lag(s,x,0,\psi).
\end{eqnarray}
The state equation at $s\geq 0$
\begin{eqnarray}
\text{Find}\;x^s\in X\;\;\text{such that for all}\;\;\psi\in Y,\;\;d_y\lag(s,x^s,0;\psi)=0.
\end{eqnarray}
The set of states $x^s$ at $s\geq 0$ is denoted
\begin{eqnarray}
E(s)=\left\{ x^s\in X,\;\;\forall\;\psi\in Y,\;\;d_y\lag(s,x^s,0;\psi)=0 \right\}.
\end{eqnarray}
The adjoint equation at $s\geq 0$  is
\begin{eqnarray}
\text{Find}\;p^s\in Y\;\;\text{such that for all}\;\;\varphi\in X,\;\;d_x\lag(s,x^s,p^s;\varphi)=0.
\end{eqnarray}
The set of solutions $p^s$ at $s\geq 0$ is denoted
\begin{eqnarray}
Y(s,x^s)=\left\{ p^s\in Y,\;\;\forall\;\varphi\in X,\;\;d_x\lag(s,x^s,p^s;\varphi)=0 \right\}.
\end{eqnarray}
Finally the set of minimisers for the minimax is given by
\begin{eqnarray}
X(s)=\left\{ x^s\in X,\;\,\textrm{g}(s)=\inf_{x\in X}\sup_{y\in Y}\lag(s,x,y)=\sup_{y\in Y}\lag(s,x^s,y) \right\}.
\end{eqnarray}
\begin{lemma} \textnormal{\textbf{(Constrained infimum and minimax)}}\\
We have the following assertions
\begin{itemize}
	\item[(i)] 
	\begin{align*}
		\inf_{x\in X}\sup_{y\in Y}\lag(s,x,y)=\inf_{x\in E(s)}\lag(s,x,0)
	\end{align*}
	\item[(ii)] The minimax $\textrm{g}(s)=+\infty$ if and only if $E(s)=\emptyset$. And in this case we have $X(s)=X$.
	\item[(iii)] If $E(s)\neq\emptyset$, then 
	\begin{align*}
		X(s)=\left\{x^s\in E(s):\;\;\lag(s,x^s,0)=\inf_{x\in E(s)}\lag(s,x,0) \right\}\subset E(s)
	\end{align*}
	and $\textrm{g}(s)<+\infty$.
\end{itemize}
\end{lemma}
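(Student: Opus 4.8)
The plan is to collapse the inner supremum $\sup_{y\in Y}\lag(s,x,y)$ into a two-valued quantity governed by whether $x$ solves the state equation, and then to read off all three assertions by elementary manipulations of infima, using throughout the convention $\inf\emptyset=+\infty$ and the finiteness of $\lag$ on its domain.

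First I would fix $s\in[0,\tau]$ and compute $\sup_{y\in Y}\lag(s,x,y)$ for each fixed $x\in X$. By the affineness of $y\mapsto\lag(s,x,y)$ and the identity $d_y\lag(s,x,0;\psi)=\lag(s,x,\psi)-\lag(s,x,0)$ recalled above, two cases arise. If $x\in E(s)$, then $\lag(s,x,\psi)=\lag(s,x,0)$ for every $\psi\in Y$, so $\sup_{y\in Y}\lag(s,x,y)=\lag(s,x,0)$. If $x\notin E(s)$, there is $\psi_0\in Y$ with $d_y\lag(s,x,0;\psi_0)\neq 0$; since $\lambda\mapsto\lag(s,x,\lambda\psi_0)=\lag(s,x,0)+\lambda\,d_y\lag(s,x,0;\psi_0)$ is affine and nonconstant, and $Y$ is stable under the relevant rescaling (and sign change) of $\psi_0$ as a space of admissible multipliers, one gets $\sup_{y\in Y}\lag(s,x,y)=+\infty$. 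In short, $\sup_{y\in Y}\lag(s,x,y)=\lag(s,x,0)$ when $x\in E(s)$ and $+\infty$ otherwise.

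With this dichotomy, assertion (i) is immediate: taking the infimum over $x\in X$, the contribution of $\{x\notin E(s)\}$ is $+\infty$ and is discarded, leaving $\textrm{g}(s)=\inf_{x\in E(s)}\lag(s,x,0)$. For (ii), if $E(s)=\emptyset$ the right-hand side of (i) is $\inf\emptyset=+\infty$; conversely, if $E(s)\neq\emptyset$, any $x_0\in E(s)$ gives $\textrm{g}(s)\leq\lag(s,x_0,0)<+\infty$, which proves the equivalence and also the inequality $\textrm{g}(s)<+\infty$ claimed in (iii). Still in the case $E(s)=\emptyset$, every $x\in X$ satisfies $\sup_{y\in Y}\lag(s,x,y)=+\infty=\textrm{g}(s)$, so $X(s)=X$. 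For (iii) with $E(s)\neq\emptyset$: any $x\notin E(s)$ has $\sup_{y\in Y}\lag(s,x,y)=+\infty\neq\textrm{g}(s)$, hence $X(s)\subset E(s)$; and for $x\in E(s)$ the defining condition of $X(s)$ reads $\lag(s,x,0)=\textrm{g}(s)=\inf_{x'\in E(s)}\lag(s,x',0)$ by (i), which is exactly the stated characterization.

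The only step I expect to require care is the claim that $x\notin E(s)$ forces $\sup_{y\in Y}\lag(s,x,y)=+\infty$: this is where the algebraic structure of $Y$ genuinely intervenes, since one must rescale, and if necessary negate, the witness $\psi_0$ within $Y$ to drive the affine map $\lambda\mapsto\lag(s,x,\lambda\psi_0)$ to $+\infty$. All other steps are routine bookkeeping with the infimum-over-empty-set convention and with the fact that $\lag$ takes finite real values on its domain.
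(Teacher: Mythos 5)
Your proof is correct. The paper itself gives no argument for this lemma --- it simply cites Delfour --- and what you have written is precisely the standard proof from that reference: collapse $\sup_{y\in Y}\lag(s,x,y)$ to $\lag(s,x,0)$ on $E(s)$ and to $+\infty$ off $E(s)$, then read off (i)--(iii) with the convention $\inf\emptyset=+\infty$. The one delicate point you flag is indeed the real content: for $x\notin E(s)$ the unboundedness of $\lambda\mapsto\lag(s,x,0)+\lambda\, d_y\lag(s,x,0;\psi_0)$ over $Y$ requires $Y$ to be a vector space (or at least closed under multiplication by all real scalars), not merely ``a non empty subset of a vector space'' as in the paper's definition of a Lagrangian; this is consistent with the hypotheses of the subsequent theorems, where $Y$ is taken to be a vector space.
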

\begin{proof}
See \cite{Delfour1}.
\end{proof}

\noindent
To end this subsection, we give definitions and theorems on d-dimensional Minkowski content and d-rectifiability. 
\begin{definition}
Let $E$ be a subset of a metric space $X$. $E\subset X$ is $d$-rectifiable if it is the image of a compact subset $K$ of $\mathbb{R}^d$ by a continuous lipschitzian function $f:\mathbb{R}^d\rightarrow X$.
\end{definition}
Let $E$ be a closed compact set of $\mathbb{R}^N$ and $r\geq 0$, the distance function $d_E$ and the $r$-dilatation $E_r$ of $E$ are defined as follows:
\begin{eqnarray*}
d_E(x)=\inf_{x_0\in E}\lvert x-x_0\rvert,\;\;\;E_r=\{ x\in\mathbb{R}^N:\;d_E(x)\leq r\}.
\end{eqnarray*}
\begin{definition}
Given $d$, $0\leq d\leq N$ the upper and lower $d$-dimensional Minkowski contents of a set $E$ are defined by an $r$-dilatation of this set as follows
\begin{eqnarray*}
	M^{*d}(E)=\limsup_{r\to 0^+}\frac{m_N(E_r)}{\alpha_{N-d}r^{N-d}};\;\;\;M^{d}_*(E)=\liminf_{r\to 0^+}\frac{m_N(E_r)}{\alpha_{N-d}r^{N-d}}
\end{eqnarray*}
where $m_N$ is the Lebesgue measure in $\mathbb{R}^N$ and $\alpha_{N-d}$ is the volume of the ball of radius $1$ in $\mathbb{R}^{N-d}$.
\end{definition}
Both concepts can be found in \cite{Delfour1}.
\subsection{Some preliminary results}
We need the following assumption for everything that follows:\\
\textbf{Hypothesis (H0)}\\ Let $X$ be a vector space.
\begin{itemize}
\item[(i)]: For all $s\in [0,\tau]$, $x^0\in X(0)$, $x^s\in X(s)$ and $y\in Y$, the function $\theta\mapsto \lag(s,x^0+\theta(x^s-x^0),y):[0,1]\rightarrow\mathbb{R}$ is absolutely continuous. This implies that for almost all $\theta$ the derivative exists and is equal to
$d_x \lag(s,x^0+\theta(x^s-x^0),y;x^s-x^0)$ and it is the integral of its derivative. In particular
\begin{eqnarray*}
	\lag(s,x^s,y)=\lag(s,x^0,y)+\int_0^1d_x \lag(s,x^0+\theta(x^s-x^0),y;x^s-x^0)\;d\theta.
\end{eqnarray*} 
\item[ii)]: For all $s\in [0,\tau]$, $x^0\in X(0)$, $x^s\in X(s)$ and $y\in Y$, $\phi\in X$ and for almost all $\theta\in[0,1]$, $d_x \lag(s,x^0+\theta(x^s-x^0),y;\phi)$ exist et the functions $\theta\mapsto d_x\lag(s,x^0+\theta(x^s-x^0),y;\phi)$ belong to $L^1[0,1]$
\end{itemize}				
\begin{definition} (Sturm \cite{These-Sturm,Sturm2})
Given $x^0\in X(0)$ and $x^s\in X(s)$, the averaged adjoint equation is:
\begin{eqnarray*}
	\text{Find} \;\;y^s\in Y\;\;\forall\;\phi\in X,\;\;\int_0^1d_x \lag(s,x^0+\theta(x^s-x^0),y;\phi)\;d\theta=0
\end{eqnarray*}
and the set of solutions is noted $Y(s,x^0,x^s)$.\\
$Y(0,x^0,x^0)$ clearly reduces to the set of standard adjoint states $Y(0,x^0)$ at $s= 0$.
\end{definition}

\begin{theorem}\label{deltheosalwater}
Consider the Lagrangian functional
\begin{eqnarray*}
	(s,x,y)\mapsto \lag(s,x,y): [0,\tau]\times X\times Y\rightarrow\mathbb{R},\;\;\tau>0
\end{eqnarray*}
where $X$ and $Y$ are vector spaces and the function $y\mapsto \lag(s,x,y)$ is affine. Assume that $\textbf{(H0)}$ and the following hypotheses are satisfied\\
$\textbf{(H1)}$ for all $s\in [0,\tau]$, $\textrm{g}(s)$ is finite, $X(s)=\{x^s\}$ and $Y(0,x^0)=\{p^0\}$ are singletons,\\
$\textbf{(H2)}$ $d_s\lag(0,x^0,y^0)$  exists,\\
$\textbf{(H3)}$ The following limit exists
\begin{eqnarray*}
	R(x^0,y^0)=\lim_{s\to 0^+}\int_0^1 d_x\lag\left( s, x^0+\theta(x^s-x^0),p^0;\frac{x^s-x^0}{s}\right) d\theta.
\end{eqnarray*}
Then, $\textrm{dg}(0)$ exists and $\textrm{dg}(0)=d_s\lag(0,x^0,p^0)+R(x^0,p^0)$.
\end{theorem}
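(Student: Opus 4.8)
The plan is to follow the averaged--adjoint (Delfour--Sturm) scheme: rewrite $\textrm{g}(s)$ as a single evaluation of $\lag$ at the state $x^s$ against a \emph{fixed} multiplier $p^0$, then split the increment $\textrm{g}(s)-\textrm{g}(0)$ into a ``state part'' and a ``parameter part'' and pass to the limit in each separately.

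First I would establish the reduction of the minimax to a Lagrangian value. Since $\textbf{(H1)}$ gives $\textrm{g}(s)<+\infty$, assertion (ii) of the Lemma on constrained infimum and minimax yields $E(s)\neq\emptyset$, and assertion (iii) then gives $X(s)=\{x^s\}\subset E(s)$; hence $d_y\lag(s,x^s,0;\psi)=0$ for all $\psi\in Y$. Because $\lag$ is affine in $y$, $d_y\lag(s,x^s,y;\psi)=\lag(s,x^s,\psi)-\lag(s,x^s,0)=d_y\lag(s,x^s,0;\psi)=0$, so $\lag(s,x^s,y)=\lag(s,x^s,0)$ for every $y\in Y$, and therefore
\begin{eqnarray*}
\textrm{g}(s)=\sup_{y\in Y}\lag(s,x^s,y)=\lag(s,x^s,0)=\lag(s,x^s,p^0),
\end{eqnarray*}
and likewise $\textrm{g}(0)=\lag(0,x^0,p^0)$. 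The point of inserting $p^0$ here is that it will later cancel the troublesome first--order term; note this identity holds for \emph{any} element of $Y$, so the adjoint character of $p^0$ enters only through $\textbf{(H3)}$.

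Next I would split, for $s>0$,
\begin{eqnarray*}
\textrm{g}(s)-\textrm{g}(0)=\big[\lag(s,x^s,p^0)-\lag(s,x^0,p^0)\big]+\big[\lag(s,x^0,p^0)-\lag(0,x^0,p^0)\big].
\end{eqnarray*}
For the first bracket, $\textbf{(H0)}$(i) applied to $\theta\mapsto\lag(s,x^0+\theta(x^s-x^0),p^0)$ gives $\int_0^1 d_x\lag(s,x^0+\theta(x^s-x^0),p^0;x^s-x^0)\,d\theta$, the integrand lying in $L^1(0,1)$ by $\textbf{(H0)}$(ii); dividing by $s>0$ and using that $d_x\lag$ is positively homogeneous of degree one in its direction slot (so $d_x\lag(\,\cdot\,;x^s-x^0)=s\,d_x\lag(\,\cdot\,;\tfrac{x^s-x^0}{s})$) converts this into $\int_0^1 d_x\lag(s,x^0+\theta(x^s-x^0),p^0;\tfrac{x^s-x^0}{s})\,d\theta$, which tends to $R(x^0,p^0)$ by $\textbf{(H3)}$. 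The second bracket divided by $s$ tends to $d_s\lag(s,x^0,p^0)$ by $\textbf{(H2)}$. Adding the two limits shows $\textrm{dg}(0)=\lim_{s\to0^+}\frac{\textrm{g}(s)-\textrm{g}(0)}{s}$ exists and equals $d_s\lag(s,x^0,p^0)+R(x^0,p^0)$.

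Once the hypotheses are granted the argument is mostly bookkeeping; the genuinely delicate point, packaged inside $\textbf{(H3)}$, is to control the state difference quotient $\tfrac{x^s-x^0}{s}$ with no a priori convergence of $x^s$ to $x^0$ — which is exactly what evaluating $d_x\lag$ at the fixed $p^0$ is designed to absorb — and I expect that checking $\textbf{(H1)}$ (well-posedness and the singleton property) together with $\textbf{(H3)}$ will be where the real work lies when the theorem is specialized to the shallow water system.
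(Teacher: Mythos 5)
Your proposal is correct and follows essentially the same route as the paper's proof: the identical decomposition $\textrm{g}(s)-\textrm{g}(0)=[\lag(s,x^s,p^0)-\lag(s,x^0,p^0)]+[\lag(s,x^0,p^0)-\lag(0,x^0,p^0)]$, with the first bracket converted via $\textbf{(H0)}$ into the averaged integral and handled by $\textbf{(H3)}$, and the second by $\textbf{(H2)}$. The only difference is that you spell out the preliminary reduction $\textrm{g}(s)=\lag(s,x^s,p^0)$ (via the constrained-infimum lemma and affineness in $y$) and the homogeneity of $d_x\lag$ in the direction slot, which the paper takes for granted.
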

\begin{proof}
Recall that $\textrm{g}(s)=\lag(s,x^s,y)$ and $\textrm{g}(0)=\lag(0,x^0,y)$ for each $y\in Y$, then for a standard adjoint state $p^0$ at $s=0$
\begin{eqnarray*}
	\textrm{g}(s)-\textrm{g}(0)=\lag(s,x^s,p^0)-\lag(s,x^0,p^0)+(\lag(s,x^0,p^0)-\lag(0,x^0,p^0)).
\end{eqnarray*}
Dividing by $s> 0$, we obtain
\begin{eqnarray*}
	\frac{\textrm{g}(s)-\textrm{g}(0)}{s}&=&\frac{\lag(s,x^s,p^0)-\lag(s,x^0,p^0)}{s}+\frac{\lag(s,x^0,p^0)-\lag(0,x^0,p^0)}{s}\\&=&\int_0^1 d_x\lag\left( s, x^0+\theta(x^s-x^0),p^0;\frac{x^s-x^0}{s}\right) d\theta\\&+&\frac{\lag(s,x^0,p^0)-\lag(0,x^0,p^0)}{s}.
\end{eqnarray*}
Going to the limit when $s$ goes to zero, we obtain
\begin{eqnarray*}
	\textrm{dg}(0)&=&\lim_{s\to 0^+}\int_0^1 d_x\lag\left( s, x^0+\theta(x^s-x^0),p^0;\frac{x^s-x^0}{s}\right) d\theta+d_s\lag(0,x^0,p^0)\\&=&d_s\lag(0,x^0,p^0)+R(x^0,p^0).
\end{eqnarray*}
\end{proof}
\begin{corollary}
Consider the Lagrangian functional
\begin{eqnarray*}
	(s,x,y)\mapsto \lag(s,x,y): [0,\tau]\times X\times Y\rightarrow\mathbb{R},\;\;\tau>0
\end{eqnarray*}
where $X$ and $Y$ are vector spaces and the function $y\mapsto \lag(s,x,y)$ is affine. Assume that $\textbf{(H0)}$ and the following assumptions are satisfied:\\
$\textbf{(H1a)}$ for all $s\in [0,\tau]$, $X(s)\neq\emptyset$,  $g(s)$ is finite, and for each $x\in X(0)$, $Y(0,x)\neq\emptyset$,\\
$\textbf{(H2a)}$ for all $x\in X(0)$ and $p\in Y(0,x)$ $d_s\lag(0,x,p)$  exists,\\
$\textbf{(H3a)}$ there exist $x^0\in X(0)$ and $p^0\in Y(0,x^0)$ such that the following limit exists
\begin{eqnarray*}
	R(x^0,p^0)=\lim_{s\to 0^+}\int_0^1 d_x\lag\left( s, x^0+\theta(x^s-x^0),p^0;\frac{x^s-x^0}{s}\right) d\theta.
\end{eqnarray*}
Then, $\textrm{dg}(0)$ exists and there exist $x^0\in X(0)$ and $p^0\in Y(0,x^0)$ such that $\textrm{dg}(0)=d_s\lag(0,x^0,p^0)+R(x^0,p^0)$.
\end{corollary}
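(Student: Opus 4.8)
The plan is to reduce the statement to Theorem~\ref{deltheosalwater} by observing that the proof of that theorem never genuinely uses the singleton hypothesis \textbf{(H1)}, only the existence of a consistent selection of states together with one adjoint state for which the relevant limits exist. Concretely, first I would use \textbf{(H3a)} to fix once and for all a pair $x^0\in X(0)$, $p^0\in Y(0,x^0)$ and a family $(x^s)_{s\in(0,\tau]}$ with $x^s\in X(s)$ (nonempty by \textbf{(H1a)}) for which $R(x^0,p^0)$ exists. I would then record, from the Lemma on constrained infimum and minimax together with the finiteness of $\textrm{g}$ on $[0,\tau]$, that $X(s)\subset E(s)$ and $\lag(s,x^s,0)=\textrm{g}(s)<+\infty$ for every $s$, and in particular that $\textrm{g}(0)$ is finite, so that no indeterminate form $\infty-\infty$ can occur below.

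Next I would exploit affineness in $y$: since $x^s\in E(s)$ satisfies $d_y\lag(s,x^s,0;\psi)=0$ for all $\psi\in Y$, one has $\lag(s,x^s,y)=\lag(s,x^s,0)+d_y\lag(s,x^s,0;y)=\textrm{g}(s)$ for every $y\in Y$, and likewise $\lag(0,x^0,y)=\textrm{g}(0)$. Evaluating at $y=p^0$ and inserting a telescoping term gives
\begin{eqnarray*}
\textrm{g}(s)-\textrm{g}(0)=\big(\lag(s,x^s,p^0)-\lag(s,x^0,p^0)\big)+\big(\lag(s,x^0,p^0)-\lag(0,x^0,p^0)\big),
\end{eqnarray*}
exactly as in the proof of Theorem~\ref{deltheosalwater}.

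Then I would divide by $s>0$, apply \textbf{(H0)}(i) (legitimate because $x^0\in X(0)$, $x^s\in X(s)$, $p^0\in Y$) to rewrite the first bracket as $\int_0^1 d_x\lag\big(s,x^0+\theta(x^s-x^0),p^0;(x^s-x^0)/s\big)\,d\theta$, using linearity of $d_x\lag$ in its last slot to pull the factor $1/s$ inside, and finally let $s\to 0^+$: the integral tends to $R(x^0,p^0)$ by \textbf{(H3a)}, the remaining quotient tends to $d_s\lag(0,x^0,p^0)$ by \textbf{(H2a)}, and the asserted formula follows for the selected $x^0,p^0$.

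I expect the only real obstacle to be bookkeeping rather than analysis: one must ensure that the \emph{same} family $(x^s)$ is used in the decomposition of $\textrm{g}(s)-\textrm{g}(0)$, in \textbf{(H0)}(i), and in the limit \textbf{(H3a)}, and that the finiteness of $\textrm{g}$ is invoked \emph{before} splitting differences — these are precisely the places where the singleton assumption of Theorem~\ref{deltheosalwater} was tacitly doing the work. Once this consistency is pinned down, the computation is word-for-word that of Theorem~\ref{deltheosalwater}, so no new estimate is needed.
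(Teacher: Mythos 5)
Your proposal is correct and follows essentially the same route as the paper: the paper states this as an immediate corollary of Theorem~\ref{deltheosalwater}, and you rightly observe that the proof of that theorem only needs a fixed selection $x^0\in X(0)$, $x^s\in X(s)$, $p^0\in Y(0,x^0)$ (consistent with the one appearing in \textbf{(H3a)}) rather than the singleton hypotheses, the identity $\textrm{g}(s)=\lag(s,x^s,y)$ for all $y$ being supplied by the constrained-infimum lemma together with affineness in $y$. The decomposition, the use of \textbf{(H0)}(i), and the passage to the limit are word-for-word those of the paper's argument, so nothing further is needed.
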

First we recall the following definitions and theorems which will be useful in the following.
\begin{definition}
An internally regular measure is a (positive) measure $\nu$ defined on the Borelian tribe of a separate topological space $X$ which verifies the following property
\begin{eqnarray}
	\forall A\subset X, \;\;A\;\text{borelian},\;\nu(A)=\{\nu (K),\;\text{K compact}\;\subset A\}.
\end{eqnarray}
\end{definition}
\textbf{Maximal Hardy-Littlewood function}
\begin{definition}
The maximal Hardy-Littlewood function is an operator $M$ that associates to any locally integrable function $f$ on $\mathbb{R}^n$ a function $Mf$; this function $Mf$ defined on each point $x \in\mathbb{R}^n$ as the upper bound of the mean values of $\vert f\vert$ on balls centered at $x$.
\end{definition}
\textbf{Mathematical formulation}: To any locally integrable function $f\in L^1_{Loc}(\mathbb{R}^n)$, we can associate the maximal Hardy-Littlewood function $Mf:\mathbb{R}^n\rightarrow [0,+\infty]$ defined by
\begin{eqnarray*}
Mf(x)=\sup_{r\ge 0}\frac{1}{\lambda_n(B(x,r))}\int_{B(x,r)}\vert f(t)\vert \;d\lambda_n(t)
\end{eqnarray*}
where $B(x,r)$ denotes the $\mathbb{R}^n$ ball centered at $x$ and of radius $r$ and $\lambda_n$ denotes the Lebesgue measure on $\mathbb{R}^n$.
\\ \textsf{Maximal Hardy-Littlewood inequality:}
\begin{prop1}
For any integrable application $f$ on $\mathbb{R}^n$ and any real $c> 0$, we have
\begin{eqnarray*}
	\lambda_n\left([Mf\geq c]\right)\leq 3^n\frac{\Vert f\Vert_1} {c}.
\end{eqnarray*}
\end{prop1}
\begin{proof}
If you want to go to the limit when $d\rightarrow c^-$, just show that 
\begin{eqnarray*}
	\forall d\geq 0\;\; \lambda_n\left([Mf> d]\right)\leq 3^n\Vert f\Vert_1/d
\end{eqnarray*}
and for this by interior regularity, it suffices to show that for any compact $K$ included in $[Mf> d]$
\begin{eqnarray*}
	\lambda_n(K)\leq 3^n\Vert f\Vert_1/d.
\end{eqnarray*}
By compactness, $K$ is covered by a finite number of such balls, and we can choose among them balls $\left( B(x,r_x)\right)_{x\in X}$ disjointed such as $K\subset \cup_{x\in X}B(x,3r_x)$. Then we have
\begin{eqnarray*}
	\lambda_n(K)\leq \lambda_n\left(\cup_{x\in X}B(x,3r_x) \right)&\leq&\sum_{x\in X}\lambda_n\left(B(x,3r_x) \right)=3^n\sum_{x\in X}\lambda_n\left(B(x,r_x)\right)\\
	&\leq& \frac{3^n}{d}\sum_{x\in X}\int_{B(x,r_x)}\vert f(t)\vert d\lambda_n(t)\leq\frac{3^n\Vert f\Vert_1}{d}
\end{eqnarray*}
because the balls are disjointed.
\end{proof}
\textbf{Lebesgue differentiation theorem }
\begin{theorem} For any function integrable in the Lebesgue sense $f\in L^1(\mathbb{R}^n)$, we have for almost all $x\in \mathbb{R}^n$
\begin{eqnarray*}
	\lim_{r\to 0}\frac{1}{\lambda(B(x,r))}\int_{B(x,r)}\vert f(t)-f(x)\vert \;d\lambda(t)=0
\end{eqnarray*}
where $B(x,r)$ denotes the $\mathbb{R}^n$ ball centered at $x$ and of radius $r$ and $\lambda$ denotes the Lebesgue measure.
\end{theorem}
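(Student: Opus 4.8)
The plan is to combine the Hardy--Littlewood maximal inequality established above with the density of continuous, compactly supported functions in $L^1(\mathbb{R}^n)$. For $f\in L^1(\mathbb{R}^n)$ introduce the sublinear "oscillation" functional
\[
	\Theta f(x)=\limsup_{r\to 0}\frac{1}{\lambda(B(x,r))}\int_{B(x,r)}\vert f(t)-f(x)\vert\, d\lambda(t),
\]
so that the theorem is exactly the assertion $\Theta f(x)=0$ for almost every $x$. It suffices to prove that for every $c>0$ the set $\{x:\Theta f(x)>c\}$ is Lebesgue-null, since then $\{\Theta f>0\}=\bigcup_{k\ge 1}\{\Theta f>1/k\}$ is null as a countable union of null sets, and $f(x)$ (hence $\Theta f(x)$) is well defined almost everywhere.

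First I would dispose of the continuous case: if $g$ is continuous, then for every $x$ and every $\varepsilon>0$ there is $\delta>0$ with $\vert g(t)-g(x)\vert<\varepsilon$ whenever $\vert t-x\vert<\delta$, so the average over $B(x,r)$ is bounded by $\varepsilon$ for $r<\delta$; hence $\Theta g\equiv 0$. Next, given $f\in L^1(\mathbb{R}^n)$ and $\varepsilon>0$, choose by density a continuous compactly supported $g$ with $\Vert f-g\Vert_1<\varepsilon$ and set $h=f-g$. Splitting $\vert f(t)-f(x)\vert\le\vert g(t)-g(x)\vert+\vert h(t)\vert+\vert h(x)\vert$ inside the average and recalling that $\sup_{r>0}\frac{1}{\lambda(B(x,r))}\int_{B(x,r)}\vert h(t)\vert\,d\lambda(t)=Mh(x)$, one gets after letting $r\to 0$ and using $\Theta g\equiv 0$ the pointwise bound $\Theta f(x)\le Mh(x)+\vert h(x)\vert$.

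Consequently $\{\Theta f>c\}\subset\{Mh>c/2\}\cup\{\vert h\vert>c/2\}$. The Hardy--Littlewood inequality (the Proposition above, applied to the integrable function $h$) bounds the measure of the first set by $3^n\Vert h\Vert_1/(c/2)=2\cdot 3^n\Vert h\Vert_1/c<2\cdot 3^n\varepsilon/c$, while the elementary Markov inequality bounds the second by $\Vert h\Vert_1/(c/2)<2\varepsilon/c$; hence $\lambda(\{\Theta f>c\})\le (2\cdot 3^n+2)\varepsilon/c$. Since $\varepsilon>0$ is arbitrary, $\lambda(\{\Theta f>c\})=0$ for every $c>0$, which finishes the proof.

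As for difficulties: there is no deep obstacle here, and the argument is essentially a careful bookkeeping exercise. The one genuinely load-bearing ingredient is the Hardy--Littlewood maximal inequality, which supplies precisely the uniform control needed to push the density approximation through the $\limsup$ in $r$; without it, the naive estimate $\Theta f\le\Theta g+\Theta h$ fails because $\Theta h$ cannot be controlled by $\Vert h\Vert_1$ directly. The minor points to watch are that $f$ is only defined up to a null set (harmless, since the conclusion is an almost-everywhere statement) and that the supremum defining $Mh$ is taken over all $r>0$, so that $\limsup_{r\to 0}$ of the average of $\vert h\vert$ is indeed dominated by $Mh(x)$.
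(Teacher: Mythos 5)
Your proof is correct and follows essentially the same route as the paper's: approximate $f$ in $L^1$ by a continuous function $g$, bound the oscillation functional of $h=f-g$ by $Mh+\vert h\vert$, and control the level sets via the Hardy--Littlewood maximal inequality together with Markov's inequality. The only differences are cosmetic (you use an $\varepsilon$-approximation and thresholds $c/2$ where the paper uses $1/k$ and the set $\{Tf>2c\}$), so nothing further needs to be said.
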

\begin{proof}
For $x\in\mathbb{R}^n$ and $r\ge 0$, we set
\begin{eqnarray*}
	T_r(f)(x)=\frac{1}{\lambda(B(x,r))}\int_{B(x,r)}\vert f(t)-f(x)\vert \;d\lambda(t)\\[0.2cm]
		\text{and}\;\;\;T(f)(x)=\limsup_{r\to 0} T_r(f)(x)
	\end{eqnarray*}
	We take here the upper limit because the limit when $r$ tends to $0$ is not necessarily defined. The idea is to show that $T(f)=0$ almost everywhere by showing that for all $c> 0$, $\{ Tf> 0\}$ is negligible.\\Let $k> 0$ be an integer. We know from the density of continuous functions in the spaces $L^p(\Omega)$ that there exists a continuous function $g$ such that $\Vert f-g\Vert_1< \frac{1}{k}$. If we put $h=f-g$, then we have $T_r(h)(x)\leq \frac{1}{\lambda( B(x,r))}\int_{B(x,r)}\vert h(t)\vert \;d\lambda(t)+\vert h(x)\vert$ and therefore $T_r(h)(x)\leq Mh(x)+\vert h(x)\vert$ where $Mh$ is the maximum Hardy-Littlewood function associated with $h$. The continuity of $g$ ensures that $T(g)=0$, from $f=g+h$ we derive $T_r(f)\leq T_r(g)+T_r(h)$ and thus passing to the upper limit $T(f)\leq T(g)+T(h)=T(h)$ which proves that $T(f)\leq Mh+\vert h\vert$.\\Then for all $c> 0$ we have the following inclusion $\{Tf>2c \}\subset \{Mh> c \}\cup \{\vert h\vert> c \}$. According to the Hardy-Littlewood inequality $\lambda\left(\{Mh> c \} \right)\leq \frac{3^n\Vert h\Vert_1}{c}\leq \frac{3^n}{ck}$ and on the other hand $\lambda\left( \{\vert h\vert> c \} \right)\leq \frac{\vert h\vert_1}{c}\leq \frac{1}{ck}$. The set $\{Mh> c \}\cup \{\vert h\vert> c \}$ which is measurable has therefore a smaller measure than $\frac{3^n+1}{ck}$. This means that $\{Tf> 2c \}$ is included in a set of measure less than $\frac{3^n+1}{ck}$ for all $k\ge 0$, by taking the intersection on $k\ge 0$ of all these sets we show that $\{Tf> 2c \}$ is included in a set of zero measure, $\{Tf\ge 2c \}$ is negligible.					
\end{proof}

\noindent
We state the following theorem which extends the Lebesgue differentiation theorem from $d=0$ to $0\leq d< N$.
\begin{theorem}\label{diffLbg}
	Let $E$ be a compact subset of $\mathbb{R}^N$ and $0\leq d< N$ an integer. Let $\alpha_k$ be the volume of the unit ball in $\mathbb{R}^k$. Assume that
	\begin{itemize}
		\item[(1)] $E$ is a $d$-rectifiable subset of $\mathbb{R}^N$ such that $\partial E=E$ and $0< H^d(E)<\infty$,
		\item[(2)] $E$ has a positive reach, i.e. there is $R>0$ such that $d^2_E\in C^{1,1}(E_R)$
		\item[(3)] and $f$ is continuous in $E_R$.\\Then
		\begin{eqnarray*}
			\lim_{r \to 0^+}\frac{1}{\alpha_{N-d}r^{N-d}}\int_{E_r}f\;d\xi=\lim_{r\to 0^+}\frac{1}{\alpha_{N-d}r^{N-d}}\int_{E_r}fop_E \;d\xi=\int_E f\;dH^d.
		\end{eqnarray*}
		where $H^d$ is the $d$-dimensional Hausdorff measure given in the following definition:
	\end{itemize}
	\begin{definition} (Hausdorff measure)\\
		Let $\Omega\subset\mathbb{R}^N$ and $d\in\mathbb{R}_+$. We define the $d$-dimensional Hausdorff measure of $A$ by 
		$$ H^d(A)=\liminf_{\epsilon \to 0}\left\{ \sum_{k=1}^\infty \alpha(d)r(C_k)^d;\;\;A\subset \cap_{k=1}^\infty(C_k)\;\;diam(C_k)\leq \epsilon \right\}$$
		where $C_k$ are arbitrary parts of $\mathbb{R}^N$, $r(C_k)=diam(C_k)/2$ is the half-diameter or radius of $C_k$ and $\alpha(d)=\frac{\pi^{d/2}}{\Gamma\left(\frac{d}{2}+1\right)}$  is the volume of the unit ball of dimension $d$ and the Euler function $\Gamma$ is defined by the following integral
		$$\Gamma(p)=\int_0^\infty \mathrm{e}^{-x}x^{p-1}\;dx.$$
	\end{definition}
\end{theorem}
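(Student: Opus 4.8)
The plan is to read each ratio $\frac{1}{\alpha_{N-d}r^{N-d}}\int_{E_r}g\,d\xi$ as the integral of $g$ against a family of measures concentrating on $E$, to prove that these measures converge to $H^d|_E$, and then to pass from $f\circ p_E$ to $f$ by uniform continuity. First I would exploit the hypothesis $d_E^2\in C^{1,1}(E_R)$, which is precisely the statement that $E$ has reach at least $R$: the metric projection $p_E$ is single-valued and Lipschitz on $E_R$, so for $0<r<R$ the dilation $E_r=\{d_E\le r\}$ is a genuine tubular neighbourhood fibred over $E$ by $p_E$, with fibres lying in the normal spaces of $E$. Since $E$ is $d$-rectifiable with $0<H^d(E)<\infty$, an approximate tangent $d$-plane exists at $H^d$-a.e.\ $a\in E$, and the requirement $\partial E=E$ forces $E$ to have empty interior, so that the tube genuinely shrinks onto $E$ transversally from a full $(N-d)$-dimensional set of directions at such points.

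Next I would disintegrate Lebesgue measure on the tube. Pushing $m_N|_{E_r}$ forward by $p_E$ and invoking Federer's Steiner-type (tube) formula for sets of positive reach, one obtains, for every Borel set $A\subseteq E$,
\[
	m_N\!\left(E_r\cap p_E^{-1}(A)\right)=\int_A J(a,r)\,dH^d(a),\qquad J(a,r)=\alpha_{N-d}\,r^{N-d}\bigl(1+O(r)\bigr),
\]
the $O(r)$ being uniform in $a\in E$ since the curvatures entering the formula are controlled by the $C^{1,1}$ norm of $d_E^2$, and the leading curvature measure being exactly $H^d|_E$. Consequently the normalised measures $\mu_r:=\frac{1}{\alpha_{N-d}r^{N-d}}\,(p_E)_\#(m_N|_{E_r})$ satisfy $\mu_r(A)\to H^d(A)$ for every Borel $A\subseteq E$, whence $\int_E g\,d\mu_r\to\int_E g\,dH^d$ for each $g\in C(E)$. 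Taking $g=f|_E$ and using that $f\circ p_E$ is constant on the fibres of $p_E$,
\[
	\frac{1}{\alpha_{N-d}r^{N-d}}\int_{E_r}f\circ p_E\,d\xi=\int_E f\,d\mu_r\;\xrightarrow[r\to 0^+]{}\;\int_E f\,dH^d .
\]
For $d=0$ and $E$ a single point, $\mu_r$ is normalised Lebesgue measure on a ball and one recovers the classical Lebesgue differentiation theorem recalled above.

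It remains to replace $f\circ p_E$ by $f$ itself. The set $E_R$ is compact, so $f$ is uniformly continuous on it with some modulus $\omega$, and $|\xi-p_E(\xi)|=d_E(\xi)\le r$ for $\xi\in E_r$; hence
\[
	\left|\frac{1}{\alpha_{N-d}r^{N-d}}\int_{E_r}\bigl(f-f\circ p_E\bigr)\,d\xi\right|\le \omega(r)\,\frac{m_N(E_r)}{\alpha_{N-d}r^{N-d}}\;\longrightarrow\;0\cdot H^d(E)=0 ,
\]
where I used $m_N(E_r)/(\alpha_{N-d}r^{N-d})\to H^d(E)$, i.e.\ the case $A=E$ of the disintegration above — equivalently, the fact that the $(N-d)$-dimensional Minkowski content of $E$ coincides with $H^d(E)$. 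Combining the last two displays yields both asserted equalities. I expect the disintegration step to be the genuine obstacle: establishing the uniform asymptotics $J(a,r)=\alpha_{N-d}r^{N-d}(1+O(r))$ together with the setwise convergence $\mu_r\to H^d|_E$ rests on the structure theory of sets of positive reach — existence and identification of the curvature (Steiner) measures, whose top-order term is $H^d|_E$ — with $d$-rectifiability ensuring that the singular part of $E$ is $H^d$-negligible and that approximate tangent planes exist almost everywhere, and the $C^{1,1}$ bound on $d_E^2$ providing the uniformity of the curvature correction.
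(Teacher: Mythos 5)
The paper states Theorem \ref{diffLbg} without any proof --- it is imported from Delfour \cite{Delfour1} --- so there is no in-text argument to compare yours against; I can only judge your sketch on its own terms, and it follows what is essentially the standard (and Delfour's own) route. Positive reach gives a single-valued Lipschitz projection $p_E$ on $E_R$; Federer's tube formula for sets of positive reach gives $m_N(E_r\cap p_E^{-1}(A))=\sum_{i\le d}\alpha_{N-i}r^{N-i}\Phi_i(E,A)$ for $r<R$, with top curvature measure $\Phi_d(E,\cdot)=H^d|_E$ (here $d$-rectifiability and $0<H^d(E)<\infty$ are what identify the leading coefficient and annihilate the terms of index $i>d$); dividing by $\alpha_{N-d}r^{N-d}$ yields the setwise convergence $\mu_r\to H^d|_E$ and hence the limit for $f\circ p_E$; and your passage from $f\circ p_E$ to $f$ via uniform continuity together with the boundedness of $m_N(E_r)/(\alpha_{N-d}r^{N-d})$ is exactly right. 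The one overstatement is the uniform fibrewise density $J(a,r)=\alpha_{N-d}r^{N-d}(1+O(r))$: the lower-order curvature measures $\Phi_i$, $i<d$, are in general only signed Radon measures and need not be absolutely continuous with respect to $H^d|_E$, so a pointwise density with an $O(r)$ uniform in $a$ is not available from the tube formula alone. This does not damage the argument, because the set-level identity already gives $\frac{1}{\alpha_{N-d}r^{N-d}}\,m_N\bigl(E_r\cap p_E^{-1}(A)\bigr)=H^d(A)+\sum_{i<d}\frac{\alpha_{N-i}}{\alpha_{N-d}}\,r^{d-i}\Phi_i(E,A)\to H^d(A)$, and setwise convergence of the uniformly bounded measures $\mu_r$ suffices to pass to bounded continuous integrands; you should simply rewrite that step at the level of measures rather than densities.
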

\section{Topological derivative}\label{sectdertopo}

\subsection{Problem model}
Let $\Omega$ be a regular open domain in $\mathbb{R}^2$ and $U:\Omega\times [0,T]\rightarrow \mathbb{R}^3$ solution of (\ref{equaSV}). We recall the following notations. For a function $\varphi:\mathbb{R}^2\rightarrow\mathbb{R}^3$, we denote by $\nabla\varphi\in\mathbb{R}^{3\times2}$ its Jacobian, $(\nabla\varphi)_{i,k}=\frac{\partial\varphi_i}{\partial x_k}$ for $i=\{1,2,3\}$, $k\in\{1,2\}$. The Euclidean vector product $a\cdot b=\sum_{1}^{3}a_ib_i$ for $a,b\in\mathbb{R}^3$. $A:B=\sum_{i=1}^{3}\sum_{k=1}^{2}A_{i,k}B_{i,k}$ is the Frobenius inner product of two matrices $A,B\in\mathbb{R}^{3\times2}$.
We recall the functional  
\begin{eqnarray}\label{shfcgen}
	J(\Omega)=\int_0^T\int_\Omega\vert\nabla U\vert^2\;dxdt+\int_0^T\int_\Omega\vert U-U_d\vert^2\;dxdt,
\end{eqnarray} 
where $U_d:L^2(\Omega\times [0,T[)^3\rightarrow \mathbb{R}$ given by $U_d=(u_{1d},u_{2d},u_{3d})$ is a given function and $U$ is a solution of the equation (\ref{equaSV}) with the notation $\vert\nabla U\vert^2=\nabla U:\nabla U$. In what follows we set the change of variables $\qu_1=hu$ and $\qu_2=hv$ and write the system (\ref{equaSV}) as follows

\begin{eqnarray}\label{reecritureSW}
	\begin{cases}
		\frac{\partial h}{\partial t}+	\frac{\partial \qu_1}{\partial x}+\frac{\partial \qu_2}{\partial y}=0\\[0.2cm]\frac{\partial \qu_1}{\partial t}+\frac{\partial}{\partial x}\left(\frac{\qu_1^2}{h}+\frac{g}{2}h^2\right)+\frac{\partial}{\partial
			y}\left(\frac{\mathrm{q}_1\qu_2}{h}\right)+gh\frac{\partial\psi}{\partial x}=0\\[0.2cm] \frac{\partial \qu_2}{\partial t}+\frac{\partial}{\partial x}\left(\frac{\qu_1\qu_2}{h}\right)+\frac{\partial}{\partial y}\left(\frac{\qu_2^2}{h}+\frac{g}{2}h^2\right)+gh\frac{\partial\psi}{\partial y}=0.
	\end{cases}		
\end{eqnarray} 
We recall that in this system (\ref{reecritureSW}), the first equation represents the continuity equation and the last two represent the momentum equations. So to do the variational formulation of (\ref{reecritureSW}), first we choose an arbitrary function $\mathrm{P}=(\pe_1,\pe_2,\pe_3)\in H^1(\Omega\times[0,T])^3$. The continuity equation is multiplied by $\pe_1$ and the two momentum equations by $\pe_2$ and $\pe_3$ respectively. The sum of these two products is then integrated in space and time
\begin{eqnarray}
\begin{split}\label{1forvarSw}
	&\;\int_0^T\int_{\Omega}\left[	\frac{\partial h}{\partial t}+\frac{\partial \qu_1}{\partial x}+\frac{\partial \qu_2}{\partial y}\right]\pe_1\;dxdt+\\ &\;\int_0^T\int_{\Omega}\left[\frac{\partial \qu_1}{\partial t}+\frac{\partial}{\partial x}\left(\frac{\qu_1^2}{h}+\frac{g}{2}h^2\right)+\frac{\partial}{\partial
		y}\left(\frac{\mathrm{q}_1\qu_2}{h}\right)+gh\frac{\partial\psi}{\partial x}\right]\pe_2 \;dxdt+\\ &\;\int_0^T\int_{\Omega}\left[	\frac{\partial \qu_2}{\partial t}+\frac{\partial}{\partial x}\left(\frac{\qu_1\qu_2}{h}\right)+\frac{\partial}{\partial y}\left(\frac{\qu_2^2}{h}+\frac{g}{2}h^2\right)+gh\frac{\partial\psi}{\partial y}\right]\pe_3\;dxdt=0.
\end{split}
\end{eqnarray}
Let $n$ be the normal vector defined by $n=(n_x,n_y)$ where $n_x$ is the normal along the $x$ direction and $n_y$ is the normal along the $y$ direction. We impose the following initial and boundary conditions repectively
\begin{eqnarray}
h(x,0)=h_0(x),\;\;\qu_1(x,0)=\qu_{10}(x),\;\;\qu_2(x,0)&=&\qu_{20}(x) \qquad \text{in}\;\;\Omega\label{ICSWE}\\
\qu_1 n_x+\qu_2 n_y&=&0 \,\qquad\qquad\text{in}\;\;\partial\Omega\times [0,T].\label{BCSWE}
\end{eqnarray}
We now define $\mathcal{V}_{\Omega}$, the subspace of $H^1(\Omega\times [0,T])^3$ given by
\begin{eqnarray}\label{funcessaiSW}
\mathcal{V}_{\Omega}=\left\{U=(h,\qu_1,\qu_2)\in H^1(\Omega\times [0,T])^3: \;\; \qu_1 n_x+\qu_2 n_y=0\;\;\text{in}\;\;\partial\Omega\times [0,T] \right\}.
\end{eqnarray}
We are looking for a solution $U$ of (\ref{equaSV}) in this space $\mathcal{V}_{\Omega}$. So by integration by parts (\ref{1forvarSw}) with respect to time and space
\begin{eqnarray*}
&\;&\int_0^T\int_{\Omega}-\frac{\partial \pe_1}{\partial t}h\;dxdt+\int_{\Omega}\left[h(x,T)\pe_1(x,T)-h_0\pe_1(x,0)\right]\;dx+ \int_0^T\int_{\Omega}-\frac{\partial \pe_1}{\partial x}\qu_1\;dxdt+\\ 
&\;&\int_0^T\int_{\partial\Omega}\pe_1\qu_1 n_x\;d\sigma dt+\int_0^T\int_{\Omega}-\frac{\partial \pe_1}{\partial y}\qu_2\;dxdt+\int_0^T\int_{\partial\Omega} \pe_1\qu_2 n_y\;d\sigma dt+\\
&\;&\int_0^T\int_{\Omega}-\frac{\partial \pe_2}{\partial t}\qu_1\;dxdt+\int_{\Omega}\left[\qu_1(x,T)\pe_2(x,T)-\qu_{10}\pe_2(x,0)\right]\;dx+\\ 
&\;&\int_0^T\int_{\Omega}-\frac{\partial\pe_2}{\partial x}\left(\frac{\qu_1^2}{h}+\frac{g}{2}h^2\right) \;dxdt+\int_0^T\int_{\partial\Omega}\left(\frac{\qu_1^2}{h}+\frac{g}{2}h^2\right)\pe_2 n_x\;d\sigma dt+\\
&\;&\int_0^T\int_{\Omega}-\frac{\partial\pe_2}{\partial
	y}\left(\frac{\mathrm{q}_1\qu_2}{h}\right)\;dxdt+\int_0^T\int_{\partial\Omega}\left(\frac{\mathrm{q}_1\qu_2}{h}\right)\pe_2n_y\;d\sigma dt+\int_0^T\int_{\Omega}gh\frac{\partial\psi}{\partial x}\pe_2 \;dxdt+\\
&\;&\int_0^T\int_{\Omega}-\frac{\partial \pe_3}{\partial t}\qu_2\;dxdt+\int_{\Omega}\left[\qu_2(x,T)\pe_3(x,T)-\qu_{20}\pe_3(x,0)\right]\;dx+ \\
&\;&\int_0^T\int_{\Omega}-\frac{\partial\pe_3}{\partial
	x}\left(\frac{\mathrm{q}_1\qu_2}{h}\right)\;dxdt+\int_0^T\int_{\partial\Omega}\left(\frac{\mathrm{q}_1\qu_2}{h}\right)\pe_3n_x\;d\sigma dt+\int_0^T\int_{\Omega}gh\frac{\partial\psi}{\partial y}\pe_3 \;dxdt+\\	
&\;&\int_0^T\int_{\Omega}-\frac{\partial\pe_3}{\partial y}\left(\frac{\qu_2^2}{h}+\frac{g}{2}h^2\right) \;dxdt+\int_0^T\int_{\partial\Omega}\left(\frac{\qu_2^2}{h}+\frac{g}{2}h^2\right)\pe_3 n_y\;d\sigma dt=0.\\
\end{eqnarray*}
We choose the following boundary conditions for the test function $P$
\begin{align}
\label{condauborSW1} &\;\pe_2n_x+\pe_3n_y=0\;\;\;\text{in}\;\;\partial\Omega\times [0,T]\\\label{condauborSW2}
&\,\left(\frac{\qu_1^2}{h}+\frac{g}{2}h^2\right)\pe_2 n_x+\left(\frac{\mathrm{q}_1\qu_2}{h}\right)\pe_2n_y+\left(\frac{\mathrm{q}_1\qu_2}{h}\right)\pe_3n_x+\left(\frac{\qu_2^2}{h}+\frac{g}{2}h^2\right)\pe_3 n_y=0\;\;\;\text{in}\;\;\partial\Omega\times [0,T].
\end{align}
And we define the following space
\begin{eqnarray}\label{functestSW}
\mathcal{W}_{\Omega}=\left\{U=(\pe_1,\pe_2,\pe_3)\in H^1(\Omega\times [0,T])^3 \;\;\text{such that}\;\; (\ref{condauborSW1})\;\text{and }\;(\ref{condauborSW2})\;\;\text{are satisfied} \right\}.
\end{eqnarray}
We can say that for any $U=(h,\qu_1,\qu_2)\in\mathcal{V}_{\Omega}$ and $P=(\pe_1,\pe_2,\pe_3)\in\mathcal{W}_{\Omega}$
\begin{align*}
&\;\int_0^T\int_{\Omega}-\frac{\partial \pe_1}{\partial t}h\;dxdt+\int_{\Omega}\left[h(x,T)\pe_1(x,T)-h_0\pe_1(x,0)\right]\;dx+ \int_0^T\int_{\Omega}-\frac{\partial \pe_1}{\partial x}\qu_1\;dxdt+\\ 
&\;\int_0^T\int_{\Omega}-\frac{\partial \pe_1}{\partial y}\qu_2\;dxdt+\int_0^T\int_{\Omega}-\frac{\partial \pe_2}{\partial t}\qu_1\;dxdt+\int_{\Omega}\left[\qu_1(x,T)\pe_2(x,T)-\qu_{10}\pe_2(x,0)\right]\;dx+\\ 
&\;\int_0^T\int_{\Omega}-\frac{\partial\pe_2}{\partial x}\left(\frac{\qu_1^2}{h}+\frac{g}{2}h^2\right) \;dxdt+\int_0^T\int_{\Omega}-\frac{\partial\pe_2}{\partial
	y}\left(\frac{\mathrm{q}_1\qu_2}{h}\right)\;dxdt+\int_0^T\int_{\Omega}gh\frac{\partial\psi}{\partial x}\pe_2 \;dxdt+\\	
&\;\int_0^T\int_{\Omega}-\frac{\partial \pe_3}{\partial t}\qu_2\;dxdt+\int_{\Omega}\left[\qu_2(x,T)\pe_3(x,T)-\qu_{20}\pe_3(x,0)\right]\;dx+ \int_0^T\int_{\Omega}-\frac{\partial\pe_3}{\partial
	x}\left(\frac{\mathrm{q}_1\qu_2}{h}\right)\;dxdt+\\
&\;\int_0^T\int_{\Omega}gh\frac{\partial\psi}{\partial y}\pe_3 \;dxdt+\int_0^T\int_{\Omega}-\frac{\partial\pe_3}{\partial y}\left(\frac{\qu_2^2}{h}+\frac{g}{2}h^2\right) \;dxdt=0.\\
\end{align*}

With the change of variables $\qu_1=hu$ and $\qu_2=hv$, we redefine the objective function (\ref{shfcgen}) as follows:
\begin{align}\label{shfcgenredef}
J(\Omega)=\int_0^T\int_{\Omega}\vert\nabla h\vert^2+\vert\nabla \qu_1\vert^2+\vert\nabla\qu_2\vert^2\;dxdt+\int_0^T\int_{\Omega}\vert h-u_{1d}\vert^2+\vert \qu_1-u_{2d}\vert^2+\vert \qu_2-u_{3d}\vert^2\;dxdt.
\end{align}
\subsection{Perturbed problems}
\noindent
Let $E$ be a compact $d$-rectifiable subset of $\mathbb{R}^2$ such that its Hausdorff measure $H^d(E)$ is finite. Suppose that there exists $R>0$ such that $d^2_E\in C^{1,1}(E_R)$. We consider the $r$-dilatation $E_r$ of $E$ and $s$ an auxiliary variable given by $s=\alpha_{2-d}r^{2-d}$ where $\alpha_{2-d}$ is the volume of the unit ball of $\mathbb{R}^{2-d}$. We consider a perturbation of the form $\mathds{1}_{\Omega_s}=\mathds{1}_{\Omega}-\mathds{1}_{E_r}$ where $\mathds{1}_{A}=\chi_A$ denotes the indicator function of $A$. So the perturbed domain is defined by $s\mapsto \Omega_s=\Omega\backslash E_r$. Let $U^s=(h^s,\qu_1^s,\qu_2^s)$ be the solution of the system
\begin{eqnarray}
\frac{\partial U^s}{\partial t}+F(U^s,\psi)=0\;\;\text{in}\;\;\Omega_s\times[0,T].
\end{eqnarray}
The associated objective function is
\begin{eqnarray}\label{pershfcgenredef}
\begin{split}
	J(\Omega_s)=&\int_0^T\int_{\Omega_s}\vert\nabla h^s\vert^2+\vert\nabla \qu_1^s\vert^2+\vert\nabla\qu_2^s\vert^2\;dxdt+\\&\int_0^T\int_{\Omega_s}\vert h^s-u_{1d}\vert^2+\vert \qu_1^s-u_{2d}\vert^2+\vert \qu_2^s-u_{3d}\vert^2\;dxdt.
\end{split}
\end{eqnarray}
We define the perturbed spaces $\mathcal{V}_{\Omega_s}$ and $\mathcal{W}_{\Omega_s}$ with reference to (\ref{funcessaiSW}) and (\ref{functestSW}) when we replace $\Omega$ by $\Omega_s$. Then for $\varphi=(\varphi_1,\varphi_2,\varphi_3)\in \mathcal{V}_{\Omega_s}$ and $\Phi=(\phi_1,\phi_2,\phi_3)\in \mathcal{W}_{\Omega_s}$, the perturbed Lagrangian is define by
\begin{eqnarray*}
\lag(s,\varphi,\Phi)&=&\int_0^T\int_{\Omega_s}\vert\nabla \varphi_1\vert^2+\vert\nabla \varphi_2\vert^2+\vert\nabla\varphi_3\vert^2\;dxdt+\\&\;&\int_0^T\int_{\Omega_s}\vert \varphi_1-u_{1d}\vert^2+\vert\varphi_2-u_{2d}\vert^2+\vert \varphi_3-u_{13}\vert^2\;dxdt+\\
&\;&\int_0^T\int_{\Omega_s}-\frac{\partial \phi_1}{\partial t}\varphi_1\;dxdt+\int_{\Omega_s}\left[\varphi_1(x,T)\phi_1(x,T)-h_0\phi_1(x,0)\right]\;dx+\\ 
&\;& \int_0^T\int_{\Omega_s}-\frac{\partial \phi_1}{\partial x}\varphi_2\;dxdt+\int_0^T\int_{\Omega_s}-\frac{\partial \phi_1}{\partial y}\varphi_3\;dxdt+\int_0^T\int_{\Omega_s}-\frac{\partial \phi_2}{\partial t}\varphi_2\;dxdt+\\
&\;&\int_{\Omega_s}\left[\varphi_2(x,T)\phi_2(x,T)-\qu_{10}\phi_2(x,0)\right]\;dx+\int_0^T\int_{\Omega_s}-\frac{\partial\phi_2}{\partial x}\left(\frac{\varphi_2^2}{\varphi_1}+\frac{g}{2}\varphi_1^2\right) \;dxdt+\\ 
&\;&\int_0^T\int_{\Omega_s}-\frac{\partial\phi_2}{\partial
	y}\left(\frac{\varphi_2\varphi_3}{\varphi_1}\right)\;dxdt+\int_0^T\int_{\Omega_s}g\varphi_1\frac{\partial\psi}{\partial x}\phi_2 \;dxdt+\int_0^T\int_{\Omega_s}-\frac{\partial \phi_3}{\partial t}\varphi_3\;dxdt+\\	
&\;&\int_{\Omega_s}\left[\varphi_3(x,T)\phi_3(x,T)-\qu_{20}\phi_3(x,0)\right]\;dx+ \int_0^T\int_{\Omega_s}-\frac{\partial\phi_3}{\partial
	x}\left(\frac{\varphi_2\varphi_3}{\varphi_1}\right)\;dxdt+\\
&\;&\int_0^T\int_{\Omega_s}g\varphi_1\frac{\partial\psi}{\partial y}\phi_3 \;dxdt+\int_0^T\int_{\Omega_s}-\frac{\partial\phi_3}{\partial y}\left(\frac{\varphi_3^2}{\varphi_1}+\frac{g}{2}\varphi_1^2\right) \;dxdt.
\end{eqnarray*}
In this case, following \cite{TD1-Delfour}\cite{TD2} the objective functions are defined by
\begin{eqnarray*}
J(\Omega_s)&=&\min_{\varphi\in \mathcal{V}_{\Omega_s}}\max_{\Phi\in \mathcal{W}_{\Omega_s}}\lag(s,\varphi,\Phi)\\
J(\Omega)&=&\min_{\varphi\in \mathcal{V}_{\Omega}}\max_{\Phi\in \mathcal{W}_{\Omega}}\lag(0,\varphi,\Phi).
\end{eqnarray*}
For $s=\alpha_{2-d}r^{2-d}$, we define $\mathrm{g}(s)=J(\Omega_s)$ and $\mathrm{g}(0)=J(\Omega)$. Then the topological derivative of $J$ is given by
\begin{eqnarray}
DJ_T=\mathrm{dg}(0)=\lim_{s\searrow 0} \frac{\mathrm{g}(s)-\mathrm{g}(0)}{s}.
\end{eqnarray}
In what follows, we define the final conditions for any function $\Phi\in \mathcal{W}_{\Omega}$ by
\begin{eqnarray}
\Phi(x,T)=0\;\;\;\text{in}\;\;\Omega.
\end{eqnarray}
We proceed to a derivative of the perturbed Lagrangian with respect to the variables $s$, $\varphi$ and $\Phi$. For $d_s\lag(s,\varphi,\Phi)$, we calculate the difference $\lag(s,\varphi,\Phi)-\lag(0,\varphi,\Phi)$ where $\lag(0,\varphi,\Phi)$ is the unperturbed Lagrangian. Therefore
\begin{align*}
\lag(0,\varphi,\Phi)=&\;\int_0^T\int_{\Omega}\vert\nabla \varphi_1\vert^2+\vert\nabla \varphi_2\vert^2+\vert\nabla\varphi_3\vert^2\;dxdt+\\&\;\int_0^T\int_{\Omega}\vert \varphi_1-u_{1d}\vert^2+\vert\varphi_2-u_{2d}\vert^2+\vert \varphi_3-u_{3d}\vert^2\;dxdt+\\
&\;\int_0^T\int_{\Omega}-\frac{\partial \phi_1}{\partial t}\varphi_1\;dxdt-\int_{\Omega} h_0\phi_1(x,0)\;dx+\\ 
&\; \int_0^T\int_{\Omega}-\frac{\partial \phi_1}{\partial x}\varphi_2\;dxdt+\int_0^T\int_{\Omega}-\frac{\partial \phi_1}{\partial y}\varphi_3\;dxdt+\int_0^T\int_{\Omega}-\frac{\partial \phi_2}{\partial t}\varphi_2\;dxdt+\\
&\;\int_{\Omega}-\qu_{10}\phi_2(x,0)\;dx+\int_0^T\int_{\Omega}-\frac{\partial\phi_2}{\partial x}\left(\frac{\varphi_2^2}{\varphi_1}+\frac{g}{2}\varphi_1^2\right) \;dxdt+\\ 
&\;\int_0^T\int_{\Omega}-\frac{\partial\phi_2}{\partial
	y}\left(\frac{\varphi_2\varphi_3}{\varphi_1}\right)\;dxdt+\int_0^T\int_{\Omega}g\varphi_1\frac{\partial\psi}{\partial x}\phi_2 \;dxdt+\\	
&\;\int_0^T\int_{\Omega}-\frac{\partial \phi_3}{\partial t}\varphi_3\;dxdt+\int_{\Omega} -\qu_{20}\phi_3(x,0)\;dx+ \int_0^T\int_{\Omega}-\frac{\partial\phi_3}{\partial
	x}\left(\frac{\varphi_2\varphi_3}{\varphi_1}\right)\;dxdt+\\
&\;\int_0^T\int_{\Omega}g\varphi_1\frac{\partial\psi}{\partial y}\phi_3 \;dxdt+\int_0^T\int_{\Omega}-\frac{\partial\phi_3}{\partial y}\left(\frac{\varphi_3^2}{\varphi_1}+\frac{g}{2}\varphi_1^2\right) \;dxdt.
\end{align*}

\begin{align*}
\lag(s,\varphi,\Phi)-\lag(0,\varphi,\Phi)=&\;-\int_0^T\int_{E_r}\vert\nabla \varphi_1\vert^2+\vert\nabla \varphi_2\vert^2+\vert\nabla\varphi_3\vert^2\;dxdt\\&\;-\int_0^T\int_{E_r}\vert \varphi_1-u_{1d}\vert^2+\vert\varphi_2-u_{2d}\vert^2+\vert \varphi_3-u_{3d}\vert^2\;dxdt\\
&\; +\int_0^T\int_{E_r}\frac{\partial \phi_1}{\partial t}\varphi_1\;dxdt+\int_{E_r}h_0\phi_1(x,0)\;dx\\ 
&\; +\int_0^T\int_{E_r}\frac{\partial \phi_1}{\partial x}\varphi_2\;dxdt+\int_0^T\int_{E_r}\frac{\partial \phi_1}{\partial y}\varphi_3\;dxdt+\int_0^T\int_{E_r}\frac{\partial \phi_2}{\partial t}\varphi_2\;dxdt\\
&\;+\int_{E_r}\qu_{10}\phi_2(x,0)\;dx+\int_0^T\int_{E_r}\frac{\partial\phi_2}{\partial x}\left(\frac{\varphi_2^2}{\varphi_1}+\frac{g}{2}\varphi_1^2\right) \;dxdt\\ 
&\; +\int_0^T\int_{E_r}\frac{\partial\phi_2}{\partial
	y}\left(\frac{\varphi_2\varphi_3}{\varphi_1}\right)\;dxdt-\int_0^T\int_{E_r}g\varphi_1\frac{\partial\psi}{\partial x}\phi_2 \;dxdt\\	
&\; +\int_0^T\int_{E_r}\frac{\partial \phi_3}{\partial t}\varphi_3\;dxdt+\int_{E_r}\qu_{20}\phi_3(x,0)\;dx+\int_0^T\int_{E_r}\frac{\partial\phi_3}{\partial
	x}\left(\frac{\varphi_2\varphi_3}{\varphi_1}\right)\;dxdt\\
&\; -\int_0^T\int_{E_r}g\varphi_1\frac{\partial\psi}{\partial y}\phi_3 \;dxdt+\int_0^T\int_{E_r}\frac{\partial\phi_3}{\partial y}\left(\frac{\varphi_3^2}{\varphi_1}+\frac{g}{2}\varphi_1^2\right) \;dxdt.
\end{align*}
So dividing by $s=\alpha_{2-d}r^{2-d}>0$ and applying the Lebesgue differentiation theorem in its more general form (See, Theorem \ref{diffLbg})
\begin{eqnarray*}
d_s\lag(0,\varphi,\Phi)&=&-\int_0^T\int_{E}\vert\nabla \varphi_1\vert^2+\vert\nabla \varphi_2\vert^2+\vert\nabla\varphi_3\vert^2\;dH^ddt\\&\;&-\int_0^T\int_{E}\vert \varphi_1-u_{1d}\vert^2+\vert\varphi_2-u_{2d}\vert^2+\vert \varphi_3-u_{3d}\vert^2\;dH^ddt\\
&\;&+\int_0^T\int_{E}\frac{\partial \phi_1}{\partial t}\varphi_1\;dH^ddt+\int_{E}h_0\phi_1(x,0)\;dH^d\\ 
&\;& +\int_0^T\int_{E}\frac{\partial \phi_1}{\partial x}\varphi_2\;dH^ddt+\int_0^T\int_{E}\frac{\partial \phi_1}{\partial y}\varphi_3\;dH^ddt+\int_0^T\int_{E}\frac{\partial \phi_2}{\partial t}\varphi_2\;dH^ddt\\
&\;&+\int_{E}\qu_{10}\phi_2(x,0)\;dH^d+\int_0^T\int_{E}\frac{\partial\phi_2}{\partial x}\left(\frac{\varphi_2^2}{\varphi_1}+\frac{g}{2}\varphi_1^2\right) \;dH^ddt\\ 
&\;&+\int_0^T\int_{E}\frac{\partial\phi_2}{\partial
	y}\left(\frac{\varphi_2\varphi_3}{\varphi_1}\right)\;dH^ddt-\int_0^T\int_{E}g\varphi_1\frac{\partial\psi}{\partial x}\phi_2 \;dxdt\\	
&\;&+\int_0^T\int_{E}\frac{\partial \phi_3}{\partial t}\varphi_3\;dH^ddt+\int_{E}\qu_{20}\phi_3(x,0)\;dH^d+\int_0^T\int_{E}\frac{\partial\phi_3}{\partial
	x}\left(\frac{\varphi_2\varphi_3}{\varphi_1}\right)\;dH^ddt\\
&\;&-\int_0^T\int_{E}g\varphi_1\frac{\partial\psi}{\partial y}\phi_3 \;dH^ddt+\int_0^T\int_{E}\frac{\partial\phi_3}{\partial y}\left(\frac{\varphi_3^2}{\varphi_1}+\frac{g}{2}\varphi_1^2\right) \;dH^ddt.
\end{eqnarray*}
$H^d$ is the $d$-dimensional Hausdorff measure. We need to calculate the derivative of the perturbed Lagrangian with respect to the variable $\varphi$ in the direction $\varphi'$, $d_\varphi\lag(s,\varphi,\Phi;\varphi')$. We will do this step by step, calculating the derivatives with respect to the variables $ \varphi_1$, $ \varphi_2$ and $ \varphi_3$ in the respective directions $ \varphi_1'$, $ \varphi_2'$ and $ \varphi_3'$
\begin{eqnarray*}
d_{\varphi_1}\lag(s,\varphi,\Phi;\varphi_1')&=&\int_0^T\int_{\Omega_s} 2\nabla\varphi_1\nabla\varphi_1'+ 2(\varphi_1-u_{1d})\varphi_1'\;dxdt+
\\
&\;&\int_0^T\int_{\Omega_s}-\frac{\partial \phi_1}{\partial t}\varphi_1'\;dxdt-\int_0^T\int_{\Omega_s}\frac{\partial\phi_2}{\partial x}\left(-\frac{\varphi_2^2}{\varphi_1^2}+g\varphi_1\right)\varphi_1' \;dxdt+\\ 
&\;&\int_0^T\int_{\Omega_s}-\frac{\partial\phi_2}{\partial
	y}\left(-\frac{\varphi_2\varphi_3}{\varphi_1^2}\right)\varphi_1'\;dxdt+\int_0^T\int_{\Omega_s}g\varphi_1'\frac{\partial\psi}{\partial x}\phi_2 \;dxdt+\\	
&\;& \int_0^T\int_{\Omega_s}-\frac{\partial\phi_3}{\partial
	x}\left(-\frac{\varphi_2\varphi_3}{\varphi_1^2}\right)\varphi_1'\;dxdt+\\
&\;&\int_0^T\int_{\Omega_s}g\varphi_1'\frac{\partial\psi}{\partial y}\phi_3 \;dxdt+\int_0^T\int_{\Omega_s}-\frac{\partial\phi_3}{\partial y}\left(\frac{\varphi_3^2}{\varphi_1^2}+g\varphi_1\right)\varphi_1' \;dxdt.
\end{eqnarray*}
\begin{eqnarray*}
d_{\varphi_2}\lag(s,\varphi,\Phi;\varphi_2')&=&\int_0^T\int_{\Omega_s} 2\nabla\varphi_2\nabla\varphi_2'+ 2(\varphi_2-u_{2d})\varphi_2'\;dxdt+\int_0^T\int_{\Omega_s}-\frac{\partial \phi_1}{\partial x}\varphi_2'\;dxdt+\\	
&\;&\int_0^T\int_{\Omega_s}-\frac{\partial \phi_2}{\partial t}\varphi_2'\;dxdt+\int_0^T\int_{\Omega_s}-\frac{\partial\phi_2}{\partial x}\left(2\frac{\varphi_2}{\varphi_1}\right)\varphi_2' \;dxdt+\\ 
&\;&\int_0^T\int_{\Omega_s}-\frac{\partial\phi_2}{\partial
	y}\left(\frac{\varphi_3}{\varphi_1}\right)\varphi_2'\;dxdt+\int_0^T\int_{\Omega_s}-\frac{\partial\phi_3}{\partial
	x}\left(\frac{\varphi_3}{\varphi_1}\right)\varphi_2'\;dxdt.\\
\end{eqnarray*}

\begin{eqnarray*}
d_{\varphi_3}\lag(s,\varphi,\Phi;\varphi_3')&=&\int_0^T\int_{\Omega_s} 2\nabla\varphi_3\nabla\varphi_3'+ 2(\varphi_3-u_{3d})\varphi_3'\;dxdt
+\int_0^T\int_{\Omega_s}-\frac{\partial \phi_1}{\partial y}\varphi_3'\;dxdt+\\	
&\;&\int_0^T\int_{\Omega_s}-\frac{\partial \phi_3}{\partial t}\varphi_3'\;dxdt+\int_0^T\int_{\Omega_s}-\frac{\partial\phi_2}{\partial y}\frac{\varphi_2}{\varphi_1}\varphi_3' \;dxdt\\ &\;&\int_0^T\int_{\Omega_s}-2\frac{\partial\phi_3}{\partial y}\frac{\varphi_3}{\varphi_1}\varphi_3'\;dxdt-\int_0^T\int_{\Omega_s}\frac{\partial\phi_3}{\partial x}\frac{\varphi_2}{\varphi_1}\varphi_3'\;dxdt.
\end{eqnarray*}
In what follows, we simply write $P$ instead of $P^0$. The latter is nothing more than $P^s=(\pe_1^s,\pe_2^s,\pe_3^s)$ for $s=0$. Now if we wish to obtain the equations of state, we differentiate the perturbed Lagrangian with respect to $\Phi$. In this case, for all $\Phi'$ the state equation at $s=0$ is $d_\Phi\lag(0,U,0;\Phi')=0$.\\
Also we obtain the adjoint equations, by differentiating the perturbed Lagrangian with respect to the variable $\varphi$ and the state adjoint $P$ verifies  the equation $d_\varphi\lag(0,U,P;\varphi')=0$ for all $\varphi'$.   We can state the following theorem:

\begin{theorem}\label{theosytemeadjSWE}
Assume that there exists a solution $U$ of the system (\ref{equaSV}). Then, with the objective function (\ref{shfcgen}), the adjoint in strong form is given by:
\begin{align*}
	&\,	-\frac{\partial \pe_1}{\partial t}+\frac{\partial\pe_2}{\partial x}\left(\frac{\qu_1^2}{h^2}-gh\right)+\frac{\partial\pe_2}{\partial
		y}\frac{\qu_1\qu_2}{h^2}+g\frac{\partial\psi}{\partial x}\pe_2  +\frac{\partial\pe_3}{\partial
		x}\frac{\qu_1\qu_2}{h^2}+g\frac{\partial\psi}{\partial y}\pe_3 \\
	&\,+\frac{\partial\pe_3}{\partial y}\left(\frac{\qu_2^2}{h^2}-gh\right)=2\Delta h- 2(h-u_{1d})
	\\
&\,	-\frac{\partial \pe_2}{\partial t}-\frac{\partial \pe_1}{\partial x}-2\frac{\partial\pe_2}{\partial x}\frac{\qu_1}{h} -\frac{\partial\pe_2}{\partial
	y}\frac{\qu_2}{h}-\frac{\partial\pe_3}{\partial
	x}\frac{\qu_2}{h}=2\Delta\qu_2- 2(\qu_2-u_{2d})
\\
&\, -\frac{\partial \pe_3}{\partial t}	-\frac{\partial \pe_1}{\partial y}-\frac{\partial\pe_2}{\partial y}\frac{\qu_1}{h}-2\frac{\partial\pe_3}{\partial y}\frac{\qu_2}{h}-\frac{\partial\pe_3}{\partial x}\frac{\qu_1}{h}=2\Delta\qu_3-2(\qu_3-u_{3d}).
\end{align*}

with the final conditions
\begin{eqnarray}
P(x,T)=0\;\;\;\text{in}\;\;\Omega
\end{eqnarray}
and  boundary conditions
\begin{align*}
&\;\pe_2n_x+\pe_3n_y=0\;\;\;\text{in}\;\;\partial\Omega\times [0,T]\\&\,
\left(\frac{\qu_1^2}{h}+\frac{g}{2}h^2\right)\pe_2 n_x+\left(\frac{\mathrm{q}_1\qu_2}{h}\right)\pe_2n_y+\left(\frac{\mathrm{q}_1\qu_2}{h}\right)\pe_3n_x+\left(\frac{\qu_2^2}{h}+\frac{g}{2}h^2\right)\pe_3 n_y=0\;\;\;\text{in}\;\;\partial\Omega\times [0,T].
\end{align*}
\end{theorem}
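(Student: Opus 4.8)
The plan is to obtain the strong form of the adjoint by unpacking its weak characterization. Recall that $P=P^{0}$ is defined by $d_{\varphi}\lag(0,U,P;\varphi')=0$ for every admissible direction $\varphi'=(\varphi_1',\varphi_2',\varphi_3')$, evaluated at the given state $\varphi=U=(h,\qu_1,\qu_2)$. First I would observe that $d_{\varphi}\lag = d_{\varphi_1}\lag + d_{\varphi_2}\lag + d_{\varphi_3}\lag$ and that the three scalar directions can be varied independently, so the single weak identity is equivalent to the three separate identities $d_{\varphi_i}\lag(0,U,P;\varphi_i')=0$, $i=1,2,3$, holding for all $\varphi_i'$; these are exactly the formulas for $d_{\varphi_1}\lag$, $d_{\varphi_2}\lag$, $d_{\varphi_3}\lag$ computed above, now specialized to $s=0$, $\varphi=U$ and $\Phi=P=(\pe_1,\pe_2,\pe_3)$.

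Next, in each of those identities the only term carrying a derivative on the test direction is the Dirichlet term $\int_0^T\int_{\Omega}2\nabla\varphi_i\cdot\nabla\varphi_i'\,dxdt$; every remaining term already has $\varphi_i'$ itself (and not its gradient) as a factor. Integrating by parts in space,
\[
\int_0^T\int_{\Omega}2\nabla\varphi_i\cdot\nabla\varphi_i'\,dxdt=-\int_0^T\int_{\Omega}2\Delta\varphi_i\,\varphi_i'\,dxdt+\int_0^T\int_{\partial\Omega}2\frac{\partial\varphi_i}{\partial n}\varphi_i'\,d\sigma dt,
\]
so each identity takes the form $\int_0^T\int_{\Omega}b_i\,\varphi_i'\,dxdt+(\text{boundary and endpoint terms})=0$ for a coefficient $b_i$. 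Taking $\varphi_i'$ supported compactly in $\Omega\times(0,T)$ and invoking the fundamental lemma of the calculus of variations forces $b_i=0$ almost everywhere; reading off the coefficient of $\varphi_i'$ in the expression for $d_{\varphi_i}\lag$ at $\varphi=U$, $\Phi=P$, and moving the terms $-2\Delta\varphi_i+2(\varphi_i-u_{id})$ to the right-hand side, then yields precisely the three displayed strong-form equations — the first from $i=1$, the second from $i=2$, the third from $i=3$.

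It remains to account for the side conditions. The endpoint contributions $\int_{\Omega}\varphi_i'(x,T)\pe_i(x,T)\,dx$ arising from the terms $\varphi_i(x,T)\phi_i(x,T)$ in $\lag$ vanish precisely because the final condition $\Phi(x,T)=0$ has been imposed on $\mathcal{W}_{\Omega}$, which is exactly the stated final condition $P(x,T)=0$; and once the interior residuals vanish, allowing $\varphi_i'$ to be nonzero on $\partial\Omega\times[0,T]$ leaves only boundary integrals, which force $P$ to lie in $\mathcal{W}_{\Omega}$, i.e. to satisfy (\ref{condauborSW1}) and (\ref{condauborSW2}). The main obstacle is not the bookkeeping of coefficients, which is routine, but securing the functional-analytic hypotheses that make each step legitimate: one needs the state $U$ regular enough for the pointwise Laplacians and the nonlinear quotients $\qu_1/h$, $\qu_1\qu_2/h^2$, $\qu_2^2/h^2$ to make sense (in particular $h$ bounded away from $0$) and for these nonlinearities to be differentiable in $\varphi$, and one must check that the admissible directions $\varphi_i'$ are dense enough in the relevant space that they both annihilate the interior residual and separately probe the boundary — so that the constraints baked into $\mathcal{V}_{\Omega}$ and the final/boundary choices defining $\mathcal{W}_{\Omega}$ are mutually compatible and no spurious transmission condition on the adjoint survives.
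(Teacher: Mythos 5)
Your proposal follows essentially the same route as the paper's proof: starting from the weak adjoint identity $d_{\varphi}\lag(0,U,P;\varphi')=0$, splitting it into the three component derivatives $d_{\varphi_i}\lag$, integrating the Dirichlet terms $\int 2\nabla\varphi_i\cdot\nabla\varphi_i'$ by parts to produce $-2\Delta\varphi_i$, and reading off the coefficients of the independent directions $\varphi_i'$ to obtain the three strong equations, with the final and boundary conditions inherited from the definition of $\mathcal{W}_{\Omega}$. You are somewhat more explicit than the paper about the localization step (compactly supported test directions and the fundamental lemma of the calculus of variations) and about the regularity hypotheses (e.g.\ $h$ bounded away from zero), but the argument is the same.
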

\begin{proof} As mentioned earlier, $P$, the solution of the adjoint equation at $s=0$ verifies $d_{\varphi}\lag(0,U,P;\varphi')=0$ for all $\varphi'=(\varphi_1',\varphi_2',\varphi_3')\in \mathcal{V}_{\Omega}$. Let us first note that
$d_{\varphi}\lag(0,\varphi,\Phi;\varphi')=d_{\varphi_1}\lag(0,\varphi,\Phi;\varphi_1')+d_{\varphi_2}\lag(0,\varphi,\Phi;\varphi_2')+d_{\varphi_3}\lag(0,\varphi,\Phi;\varphi_3')$. So for all $\varphi' \in \mathcal{W}_{\Omega}$
\begin{eqnarray*} 
&\;&\int_0^T\int_{\Omega} 2\nabla h\nabla\varphi_1'+ 2(h-u_{1d})\varphi_1'\;dxdt
+\\ 
&\;&\int_0^T\int_{\Omega}-\frac{\partial \pe_1}{\partial t}\varphi_1'\;dxdt-\int_0^T\int_{\Omega}\frac{\partial\pe_2}{\partial x}\left(-\frac{\qu_1^2}{h^2}+gh\right)\varphi_1' \;dxdt+\\ 
&\;&\int_0^T\int_{\Omega}-\frac{\partial\pe_2}{\partial
	y}\left(-\frac{\qu_1\qu_2}{h^2}\right)\varphi_1'\;dxdt+\int_0^T\int_{\Omega}g\varphi_1'\frac{\partial\psi}{\partial x}\pe_2 \;dxdt+\\	
&\;& \int_0^T\int_{\Omega}-\frac{\partial\pe_3}{\partial
	x}\left(-\frac{\qu_1\qu_2}{h^2}\right)\varphi_1'\;dxdt+\\
&\;&\int_0^T\int_{\Omega}g\varphi_1'\frac{\partial\psi}{\partial y}\pe_3 \;dxdt+\int_0^T\int_{\Omega}-\frac{\partial\pe_3}{\partial y}\left(\frac{\qu_2^2}{h^2}+gh\right)\varphi_1' \;dxdt+\\
\end{eqnarray*}
\begin{eqnarray*}
&\;&\int_0^T\int_{\Omega} 2\nabla\qu_1\nabla\varphi_2'+ 2(\qu_1-u_{2d})\varphi_2'\;dxdt+\int_0^T\int_{\Omega}-\frac{\partial \pe_1}{\partial x}\varphi_2'\;dxdt+\\	
&\;&\int_0^T\int_{\Omega}-\frac{\partial \pe_2}{\partial t}\varphi_2'\;dxdt+\int_0^T\int_{\Omega}-\frac{\partial\pe_2}{\partial x}\left(2\frac{\qu_1}{h}\right)\varphi_2' \;dxdt+\\ 
&\;&\int_0^T\int_{\Omega}-\frac{\partial\pe_2}{\partial
	y}\left(\frac{\qu_2}{h}\right)\varphi_2'\;dxdt+\int_0^T\int_{\Omega}-\frac{\partial\pe_3}{\partial
	x}\left(\frac{\qu_2}{h}\right)\varphi_2'\;dxdt+\\
&\,&\int_0^T\int_{\Omega} 2\nabla\qu_2\nabla\varphi_3'+ 2(\qu_2-u_{3d})\varphi_3'\;dxdt
+\int_0^T\int_{\Omega}-\frac{\partial \pe_1}{\partial y}\varphi_3'\;dxdt+\\	
&\;&\int_0^T\int_{\Omega}-\frac{\partial \pe_1}{\partial t}\varphi_3'\;dxdt+\int_0^T\int_{\Omega}-\frac{\partial\pe_2}{\partial y}\frac{\qu_1}{h}\varphi_3' \;dxdt\\ &\;&\int_0^T\int_{\Omega}-2\frac{\partial\pe_3}{\partial y}\frac{\qu_2}{h}\varphi_3'\;dxdt-\int_0^T\int_{\Omega}\frac{\partial\pe_3}{\partial x}\frac{\qu_1}{h}\varphi_3'\;dxdt=0.
\end{eqnarray*}
For any $ \varphi'$, we have the following equation
\begin{align*}
&\,\left[-2\Delta h+2(h-u_{1d})-\frac{\partial \pe_1}{\partial t}+\frac{\partial\pe_2}{\partial x}\left(\frac{\qu_1^2}{h^2}-gh\right)+\frac{\partial\pe_2}{\partial
	y}\frac{\qu_1\qu_2}{h^2}+g\frac{\partial\psi}{\partial x}\pe_2  \right]\varphi_1'+\\
&\,\left[\frac{\partial\pe_3}{\partial
	x}\frac{\qu_1\qu_2}{h^2}+g\frac{\partial\psi}{\partial y}\pe_3 +\frac{\partial\pe_3}{\partial y}\left(\frac{\qu_2^2}{h^2}-gh\right)\right]\varphi_1'+
\\
&\;\left[-2\Delta\qu_2+2(\qu_2-u_{2d})-\frac{\partial \pe_2}{\partial t}-\frac{\partial \pe_1}{\partial x}-2\frac{\partial\pe_2}{\partial x}\frac{\qu_1}{h} -\frac{\partial\pe_2}{\partial
	y}\frac{\qu_2}{h}-\frac{\partial\pe_3}{\partial
	x}\frac{\qu_2}{h}
\right]\varphi_2'+
\\
&\;	\left[-2\Delta\qu_3+2(\qu_3-u_{3d})-\frac{\partial \pe_1}{\partial t}	-\frac{\partial \pe_1}{\partial y}-\frac{\partial\pe_2}{\partial y}\frac{\qu_1}{h}-2\frac{\partial\pe_3}{\partial y}\frac{\qu_2}{h}-\frac{\partial\pe_3}{\partial x}\frac{\qu_1}{h}\right]\varphi_3'=0.
\end{align*}
This equation can be seen as a system of three equations constituting the system of adjoint equations
\begin{align*}
&\,	-\frac{\partial \pe_1}{\partial t}+\frac{\partial\pe_2}{\partial x}\left(\frac{\qu_1^2}{h^2}-gh\right)+\frac{\partial\pe_2}{\partial
	y}\frac{\qu_1\qu_2}{h^2}+g\frac{\partial\psi}{\partial x}\pe_2  +\frac{\partial\pe_3}{\partial
	x}\frac{\qu_1\qu_2}{h^2}+g\frac{\partial\psi}{\partial y}\pe_3 \\
&\,+\frac{\partial\pe_3}{\partial y}\left(\frac{\qu_2^2}{h^2}-gh\right)=2\Delta h- 2(h-u_{1d})
\\
&\,	-\frac{\partial \pe_2}{\partial t}-\frac{\partial \pe_1}{\partial x}-2\frac{\partial\pe_2}{\partial x}\frac{\qu_1}{h} -\frac{\partial\pe_2}{\partial
	y}\frac{\qu_2}{h}-\frac{\partial\pe_3}{\partial
	x}\frac{\qu_2}{h}=2\Delta\qu_2- 2(\qu_2-u_{2d})
\\
&\, -\frac{\partial \pe_3}{\partial t}	-\frac{\partial \pe_1}{\partial y}-\frac{\partial\pe_2}{\partial y}\frac{\qu_1}{h}-2\frac{\partial\pe_3}{\partial y}\frac{\qu_2}{h}-\frac{\partial\pe_3}{\partial x}\frac{\qu_1}{h}=2\Delta\qu_3-2(\qu_3-u_{3d}).
\end{align*}
The terms on the right-hand side are derived from an integration by parts.  We apply Green's formula taking into account the boundary conditions on $\Omega$. This system is equivalent to
\begin{eqnarray}\label{Matriceadjoint}
-\frac{\partial P}{\partial t}+AP_x+BP_y+CP+S=0
\end{eqnarray}
which is nothing but the resulting system of adjoint equations written in matrix form with $P=(\pe_1,\pe_2,\pe_3)$ and the matrices $A$, $B$ and $C$ are given by 
\begin{eqnarray*}
A= \begin{pmatrix}
	0&\frac{\qu_1^2}{h^2}-gh&\frac{\qu_1\qu_2}{h^2}\\-1&-\frac{2\qu_1}{h}&-\frac{\qu_2}{h}\\0&0&-\frac{\qu_1}{h}
\end{pmatrix},\;\;\;
B=\begin{pmatrix}
	0&\frac{\qu_1\qu_2}{h^2}&\frac{\qu_2^2}{h^2}-gh\\ 0&-\frac{\qu_2}{h}&0\\-1&-\frac{q_1}{h}&-\frac{2\qu_2}{h}
\end{pmatrix}\;\;\text{and}\;\;\; 	C=\begin{pmatrix}
	0&g\frac{\partial\psi}{\partial x}&g\frac{\partial\psi}{\partial y}\\ 0&0&0\\ 0&0&0
\end{pmatrix}.
\end{eqnarray*}
The vector $S$ is given by $S=\Big(2\Delta h- 2(h-u_{1d}),2\Delta\qu_2- 2(\qu_2-u_{2d}),2\Delta\qu_3-2(\qu_3-u_{3d})\Big)^T$.
\end{proof}
\subsection{A necessary condition for existence of the topological derivative}
The $R(U,P)$ function is defined as the limit of the $R(s)$ function when $s$ goes to zero. $R(s)$ is defined by
\begin{eqnarray*}
R(U,P)=\lim_{s\to 0^+}\, R(s)&=&\lim_{s\to 0^+}\int_0^1 d_{\varphi}\lag\left(s,U+\theta(U^s-U), P,\frac{U^s-U}{s}\right)\;d\theta\\
&=&\lim_{s\to 0^+}\frac{1}{s}\int_0^1 d_{\varphi}\lag\left(s,U+\theta(U^s-U), P,U^s-U\right)\;d\theta.
\end{eqnarray*}
The function of the $s$ parameter, $R(s)$ is given explicitly by the following expression
\begin{equation*}
\begin{aligned}
R(s)&=\frac{1}{s}\int_0^1\int_0^T\int_{\Omega}\Big[ 2\nabla(h+\theta(h^s-h))\nabla(h^s-h)+ 2(h+\theta(h^s-h)-u_{1d})(h^s-h)
\\[0.2cm] &-\frac{\partial \pe_1}{\partial t}(h^s-h)-\frac{\partial\pe_2}{\partial x}\left(-\frac{(\qu_1+\theta(\qu_1^s-\qu_1))^2}{(h+\theta(h^s-h))^2}+g(h+\theta(h^s-h))\right)(h^s-h) \\[0.2cm] 
&-\frac{\partial\pe_2}{\partial
	y}\left(-\frac{(\qu_1+\theta(\qu_1^s-\qu_1))(\qu_2+\theta(\qu_2^s-\qu_2))}{(h+\theta(h^s-h))^2}\right)(h^s-h)+g(h^s-h)\frac{\partial\psi}{\partial x}\pe_2 \\[0.2cm]	
& -\frac{\partial\pe_3}{\partial
	x}\left(-\frac{(\qu_1+\theta(\qu_1^s-\qu_1))(\qu_2+\theta(\qu_2^s-\qu_2))}{(h+\theta(h^s-h))^2}\right)(h^s-h)+g(h^s-h)\frac{\partial\psi}{\partial y}\pe_3 \\[0.2cm] 
& -\frac{\partial\pe_3}{\partial y}\left(\frac{(\qu_2+\theta(\qu_2^s-\qu_2))^2}{(h+\theta(h^s-h))^2}+g(h+\theta(h^s-h))\right)(h^s-h) \\[0.2cm]
&+ 2\nabla(\qu_1+\theta(\qu_1^s-\qu_1))\nabla(\qu_1^s-\qu_1)+  2(\qu_1+\theta(\qu_1^s-\qu_1-u_{2d}))(\qu_1^s-\qu_1)\\
& -\frac{\partial \pe_1}{\partial x}(\qu_1^s-\qu_1)-\frac{\partial \pe_2}{\partial t}(\qu_1^s-\qu_1) -\frac{\partial\pe_2}{\partial x}\left(2\frac{\qu_1+\theta(\qu_1^s-\qu_1)}{h+\theta(h^s-h)}\right)(\qu_1^s-\qu_1) \\[0.2cm]
\end{aligned}
\end{equation*}
\begin{equation*}
\begin{aligned}
&-\frac{\partial\pe_2}{\partial
	y}\left(\frac{\qu_2+\theta(\qu_2^s-\qu_2)}{h+\theta(h^s-h)}\right)(\qu_1^s-\qu_1) -\frac{\partial\pe_3}{\partial
	x}\left(\frac{\qu_2+\theta(\qu_2^s-\qu_2)}{h+\theta(h^s-h)}\right)(\qu_1^s-\qu_1)\\[0.2cm]
& + 2\nabla(\qu_2+\theta(\qu_2^s-\qu_2)\nabla(\qu_2^s-\qu_2)+  2(\qu_2+\theta(\qu_2^s-\qu_2)-u_{3d})(\qu_2^s-\qu_2)
\\[0.2cm] &-\frac{\partial \pe_1}{\partial y}(\qu_2^s-\qu_2)-\frac{\partial \pe_1}{\partial t}(\qu_2^s-\qu_2)-\frac{\partial\pe_2}{\partial y}\frac{\qu_1+\theta(\qu_1^s-\qu_1)}{h+\theta(h^s-h)}(\qu_2^s-\qu_2) \\[0.2cm]&-2\frac{\partial\pe_3}{\partial y}\frac{\qu_2+\theta(\qu_2^s-\qu_2)}{h+\theta(h^s-h)}(\qu_2^s-\qu_2)-\frac{\partial\pe_3}{\partial x}\frac{\qu_1+\theta(\qu_1^s-\qu_1)}{h+\theta(h^s-h)}(\qu_2^s-\qu_2)\Big] \;dxdtd\theta.
\end{aligned}
\end{equation*}				
The existence of the topological derivative depends strongly on the limit of the function $R(s)$ when $s$ goes to zero. Since we are dealing here with a non-linear PDE, it is not easy to compute the limit of $R(s)$. However, if the limit exists, we can state the following theorem, which we have just demonstrated.
\begin{theorem}
Assume that the perturbed state equation (\ref{equaSV}) and the adjoint state system given in the Theorem \ref{theosytemeadjSWE} admit each a unique solution. Then, under hypothesis (H0), if the function $R(s)$ admits a finite limit denoted $R(U,P)$, then the topological derivative of the objective function exists and is given by
\begin{eqnarray*}
DJ_T&=&R(U,P)-\int_0^T\int_{E}\vert\nabla h\vert^2+\vert\nabla \qu_1\vert^2+\vert\nabla\qu_2\vert^2\;dH^ddt\\&\;&-\int_0^T\int_{E}\vert h-u_{1d}\vert^2+\vert\qu_1-u_{2d}\vert^2+\vert \qu_2-u_{3d}\vert^2\;dH^ddt\\
&\;&+\int_0^T\int_{E}\frac{\partial \pe_1}{\partial t}h\;dH^ddt+\int_{E}h_0\pe_1(x,0)\;dH^d\\ 
&\;& +\int_0^T\int_{E}\frac{\partial \pe_1}{\partial x}\qu_1\;dH^ddt+\int_0^T\int_{E}\frac{\partial \pe_1}{\partial y}\qu_2\;dH^ddt+\int_0^T\int_{E}\frac{\partial \pe_2}{\partial t}\qu_1\;dH^ddt\\
&\;&+\int_{E}\qu_{10}\pe_2(x,0)\;dH^d+\int_0^T\int_{E}\frac{\partial\pe_2}{\partial x}\left(\frac{\qu_1^2}{h}+\frac{g}{2}h^2\right) \;dH^ddt\\ 
&\;&+\int_0^T\int_{E}\frac{\partial\pe_2}{\partial
	y}\left(\frac{\qu_1\qu_2}{h}\right)\;dH^ddt-\int_0^T\int_{E}gh\frac{\partial\psi}{\partial x}\pe_2 \;dxdt+\int_0^T\int_{E}\frac{\partial \pe_3}{\partial t}\pe_2\;dH^ddt\\	
&\;&+\int_{E}\qu_{20}\pe_3(x,0)\;dH^d+\int_0^T\int_{E}\frac{\partial\pe_3}{\partial x}\left(\frac{\qu_1\qu_2}{h}\right)\;dH^ddt\\
&\;&-\int_0^T\int_{E}gh\frac{\partial\psi}{\partial y}\pe_3 \;dH^ddt+\int_0^T\int_{E}\frac{\partial\pe_3}{\partial y}\left(\frac{\qu_2^2}{h}+\frac{g}{2}h^2\right) \;dH^ddt.
\end{eqnarray*}
where $U=(h,\qu_1,\qu_2)$ and $P=(\pe_1,\pe_2,\pe_3)$ are respectively solutions of (\ref{equaSV}) and (\ref{Matriceadjoint}).
\end{theorem}

\section{Topological derivative using the Gangl and Sturm method}\label{secttopogangl}
\noindent
With the min-max approach developed in the previous sections, it is not at all easy to compute the limit of the function $R(s)$ when goes to zero.  We now turn to the ideas recently developed by  Gangl and Sturm \cite{ganglautomated}, which allow to avoid the explicit computation of the limit $R(U_0,P_0)$. But in this method, the main challenges are to fulfill to main hypotheses. One of them need to  be able to claim the existence of a corrector term. To this end, we propose an alternative writing of the system (\ref{equaSV}) where we add an extra term as follows:
\begin{eqnarray}\label{swevisqueux}
\frac{\partial U}{\partial t}+\nabla\cdot F(U)-\nabla\cdot(Q(\alpha)\nabla U)=S(U)\qquad\qquad\text{on}\quad \Omega\times[0,T]
\end{eqnarray}
where
\begin{eqnarray*}
F(U)=\begin{pmatrix}
hu&hv\\ hu^2+\frac{1}{2}gh^2&huv \\  huv & hv^2+\frac{1}{2}gh^2
\end{pmatrix}\qquad\text{and}\qquad
S(U)=\begin{pmatrix}
0\\ -gh\frac{\partial\psi}{\partial x}\\ -gh\frac{\partial\psi}{\partial y}
\end{pmatrix}.
\end{eqnarray*}
and the same initial and boundary conditions (\ref{ICSWE}) and (\ref{BCSWE}) apply here. In this new system, we have also introduced an additional term $\nabla\cdot(Q(\alpha)\nabla U) $, the Laplacian artificial viscosity. Here, the $Q(\alpha)$ coefficient is a parameter that controls the amount of viscosity. For further details on viscosity coefficients, please refer to \cite{olofsub}.
$Q(\alpha)$ is a diagonal matrix of size $3\times 3$ defined by
\begin{eqnarray*}
Q(\alpha)=\begin{pmatrix}
0&0&0\\ 0&\alpha_1 &0 \\0 &0&\alpha_2
\end{pmatrix}
\end{eqnarray*}
with  $\alpha_1, \alpha_2\in \mathbb{R}_+$ such that 
\begin{eqnarray*}
\nabla\cdot(Q(\alpha)\nabla U)=\begin{pmatrix}
0\\ \alpha_1\nabla^2 (hu)\\ \alpha_2\nabla^2 (hv)
\end{pmatrix}.
\end{eqnarray*}
Here we have chosen to take zero as the first element of the diagonal of the matrix $Q(\alpha)$. This choice is justified by the fact that, in general, no viscosity is placed on the continuity equation in the viscous shallow water system, making it a non-fully parabolic system. One of the simple reasons why it may be necessary to place a viscosity on the continuity equation is to avoid the discontinuities that can occur in the original formulation of the hyperbolic shallow water equation \cite{lukaswe}. \\In what follows, our aim is to compute the topological derivative of the objective function $J(\Omega)$, defined by 
\begin{eqnarray}
J(\Omega)=\int_{0}^T\int_{\Omega}j^\Omega(x,U,\nabla U)\;dxdt.
\end{eqnarray}
where effectively $U$ is a solution of the viscous shallow water equation (\ref{swevisqueux}). For this purpose, we consider a computational domain $D$ which is subdivided into two open disjoint subdomains: $\Omega$ and its complementary in $D$, i.e. $D=\Omega^{int}\cup\Omega^{ext}$ where $\Omega^{int}=\Omega$ and $\Omega^{ext}=D\backslash\overline{\Omega}$. We can then write
\begin{eqnarray}
J(\Omega)=\int_{0}^T\int_{D}j^\Omega(x,U,\nabla U)\;dxdt
\end{eqnarray}
with $j^\Omega(x,U,\nabla U)$ is equal to zero outside of $\Omega$ or simply
\begin{eqnarray}\label{JsurD}
j^\Omega(x, U,\nabla U)=\chi_\Omega(x)j^{int}(x, U,\nabla U)+\chi_{D\backslash\Omega}(x)j^{ext}(x, U,\nabla U),\quad \text{with}\;\; j^{ext}(x, U,\nabla U)=0
\end{eqnarray}	
and
\begin{eqnarray*}
j^{int}(x, U,\nabla U)=\vert \nabla U\vert^2+\vert U-U_d\vert^2
\end{eqnarray*}
where $U\in H^1(D\times [0,T])^3$ is a solution to the variational equation
\begin{align*}
&\;\int_0^T\int_{D}\partial_t(U)\cdot V\;dxdt+\int_0^T\int_{D}(\nabla\cdot(F(U)))\cdot V\;dxdt+\int_0^T\int_{D}Q(\alpha)\nabla U:\nabla V\;dxdt-\\&\;\int_0^T\int_{D}S(U)\cdot V\;dxdt=0,\;\;\forall\; V\in H^1_0(D\times [0,T])^3.
\end{align*}
This last equation can be rewritten as follows
\begin{align}\label{swe1}
&\;\int_0^T\int_{D}\left( \partial_t (U)+\nabla\cdot( F(U))- S(U)\right)\cdot V\;dxdt+\int_0^T\int_{D} Q(\alpha)\nabla U:\nabla V\;dxdt=0.
\end{align}
In this case the PDE-constrained topology optimization problem is written:
\begin{eqnarray*}
	&\;&\min_{\Omega\in\mathcal{O}_{ad}} \quad	J(\Omega)=\int_{0}^T\int_{D}j^\Omega(x,U,\nabla U)\;dxdt\\[0.2cm]
	&\,&\quad s.t. \quad U \;\text{solves}\; (\ref{swe1})
\end{eqnarray*} 
where $j^\Omega$ is defined in (\ref{JsurD}) and $\mathcal{O}_{ad}$ is the set of admissible subsets of $D$. We consider the following operators
\begin{eqnarray*}
&\;&A_1^{int}, A_1^{ext}:D\times\mathbb{R}^3\times\mathbb{R}^{3\times 2}\rightarrow \mathbb{R}^{3}\\
&\;&A_2^{int}, A_2^{ext}:D\times\mathbb{R}^3\times\mathbb{R}^{3\times 2}\rightarrow \mathbb{R}^{3\times 2}
\end{eqnarray*}	
and we define the two piecewise operators $A_1^\Omega:D\times\mathbb{R}^3\times\mathbb{R}^{3\times 2}\rightarrow \mathbb{R}^{3}$ and $A_2^\Omega:D\times\mathbb{R}^3\times\mathbb{R}^{3\times 2}\rightarrow \mathbb{R}^{3\times 2}$
\begin{eqnarray*}
A_1^\Omega(x,y_1,y_2)=\chi_\Omega(x)A_1^{int}(x,y_1,y_2)+\chi_{D\backslash\Omega}(x)A_1^{ext}(x,y_1,y_2)\\
A_2^\Omega(x,y_1,y_2)=\chi_\Omega(x)A_2^{int}(x,y_1,y_2)+\chi_{D\backslash\Omega}(x)A_2^{ext}(x,y_1,y_2).
\end{eqnarray*}
We set
\begin{eqnarray*}
&\,&A_1^\Omega(x,U,\nabla U)=\partial_t(U)+\nabla\cdot(F(U))- S(U)\in \mathbb{R}^{3}\\&\;& A_2^\Omega(x,U,\nabla U)=Q(\alpha)\nabla U\in\mathbb{R}^{3\times 2}.
\end{eqnarray*}
For a function $\varphi:\mathbb{R}^2\rightarrow\mathbb{R}^3$, we denote by $\nabla\varphi\in\mathbb{R}^{3\times 2}$ its Jacobian, $(\nabla\varphi)_{i,k}=\frac{\partial\varphi_i}{\partial x_k}$ for $i=\{1,2,3\}$, $k\in\{1,2\}$. The Euclidean vector product $a\cdot b=\sum_{1}^{3}a_ib_i$ for $a,b\in\mathbb{R}^3$. $A:B=\sum_{i=1}^{3}\sum_{k=1}^{2}A_{i,k}B_{i,k}$ is the Frobenius inner product of two matrices $A,B\in\mathbb{R}^{3\times 2}$. We would like to underline that in all that follows, we use the notation $\partial_U\mathcal{L}(\epsilon,U_\epsilon,0)(\varphi)$ to designate the partial derivative of the Lagrangian with respect to the variable $U$ in the $\varphi$-direction. In the result we want to demonstrate, we need to remember the following definitions and theorem which can be found in \cite{ganlsimplified}\cite{ganglautomated}.
\begin{definition} \cite{ganlsimplified}\label{defstatandadjoint}\\
Parametrized Lagrangian: Let $X$ and $Y$ be two vector spaces and $\tau>0$. A parameterized Lagrangian (or short Lagrangian) is a function 
\begin{eqnarray*}
(\epsilon,U,P)\mapsto \mathcal{L}(\epsilon,U,P):[0,\tau]\times X\times Y\rightarrow\mathbb{R}
\end{eqnarray*}
satisfying 
\begin{eqnarray*}
P\mapsto \mathcal{L}(\epsilon,U,P)\;\text{is affine on $Y$}.
\end{eqnarray*}
State and adjoint state: Let $\epsilon\in [0,\tau]$. We define the state equation by: find $U_\epsilon\in X$ such that
\begin{eqnarray*}
\partial_P\mathcal{L}(\epsilon,U_\epsilon,0)(\varphi)=0,\;\;\text{for all $\varphi\in Y$.}
\end{eqnarray*}
and the set of states is denoted $E(\epsilon)$. We also define the adjoint state equation by: find $P_\epsilon\in Y$, such that
\begin{eqnarray*}
\partial_U \lag(\epsilon,U_\epsilon,P_\epsilon)(\varphi)=0,\;\;\text{for all $\varphi\in X$.}
\end{eqnarray*}
The set of adjoint states associated with $(\epsilon,U_\epsilon)$ is denoted by $Y(\epsilon,U_\epsilon)$.
\end{definition}				
\begin{definition}($\ell$-differentiable Lagrangian \cite{ganlsimplified} )\\
	Let $X$ and $Y$ be two vector spaces and $\tau>0$. Let $\ell:[0,\tau]\rightarrow\mathbb{R}$ a given function satisfying $\ell(0)=0$ and $\ell(\epsilon)>0$ for all $\epsilon\in ]0,\tau]$. A differentiable parametrized Lagrangian is a parametrized Lagrangian. $\lag:[0,\tau]\times X\times Y\rightarrow\mathbb{R}$ satisfying
	\begin{itemize}
		\item[(i)] for all $U,V\in X$ and $P\in Y$
		\begin{eqnarray*}
			\theta\mapsto \lag(\epsilon,U+\theta V,P)\;\;\text{is absolutely continuous on $[0,1]$}
		\end{eqnarray*}
		\item [(ii)] for all $U_0\in E(0)$ and $P_0\in Y(0,U_0)$ the limit
		\begin{eqnarray*}
			\partial_\ell \lag(0,U_0,P_0):=\lim_{\epsilon\rightarrow 0^+}\;\frac{\lag(\epsilon,U_0,P_0)-\lag(0,U_0,P_0)}{\ell(\epsilon)}\;\text{exists.}
		\end{eqnarray*}
	\end{itemize}
\end{definition}
\noindent	
\textbf{Hypothesis (H0a)}\\
\begin{itemize}
	\item[(i)] We assume that for all $\epsilon\in [0,\tau]$, the set $E(\epsilon)=\{U_\epsilon\}$ is a singleton\\
	\item[(ii)] We assume that the adjoint equation for $\epsilon=0$, $\partial_U \mathcal{L}(0,U_0,P_0)(\varphi)$ for all $\varphi\in E$, admits a unique solution.
\end{itemize}
We state the following theorem, which is simply a refinement of Theorem \ref{deltheosalwater}. Its specificity is that it has two $R$ terms instead of one. 
\begin{theorem}\label{theoderivegang}
	Let $\mathcal{L}:[0,\tau]\times X\times Y\rightarrow\mathbb{R}$ a differentiable parametrized Lagrangian satisfying the Hypothesis (H0a). We define for $\epsilon>0$
	\begin{eqnarray}
		R_1^\epsilon(U_0,P_0):=\frac{1}{\ell(\epsilon)}\int_0^1\left[ \partial_U \mathcal{L}(\epsilon,\theta U_\epsilon+(1-\theta)U_0,P_0)-\partial_U \mathcal{L}(\epsilon,U_0,P_0) \right](U_\epsilon-U_0)\;d\theta
	\end{eqnarray}
	\begin{eqnarray}
		R_2^\epsilon(U_0,P_0):=\frac{1}{\ell(\epsilon)}\left[ \partial_U \mathcal{L}(\epsilon,U_0,P_0)-\partial_U \mathcal{L}(0,U_0,P_0) \right](U_\epsilon-U_0).
	\end{eqnarray}
	If $R_1(U_0,P_0)=\lim_{\epsilon\rightarrow 0^+}\; R_1^\epsilon(U_0,P_0)$ and $R_2(U_0,P_0)=\lim_{\epsilon\rightarrow 0^+}\; R_2^\epsilon(U_0,P_0)$ exist, then
	\begin{eqnarray*}
		d_\ell \mathrm{g}(0)=\partial_\ell \mathcal{L}(0,U_0,P_0)+R_1(U_0,P_0)+R_2(U_0,P_0).
	\end{eqnarray*}
\end{theorem}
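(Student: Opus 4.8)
The strategy is to mimic the telescoping argument used in the proof of Theorem~\ref{deltheosalwater}, but to split the difference quotient of $\mathrm{g}$ into \emph{three} pieces rather than two, so that the new cross-term $R_2$ absorbs the dependence of $\partial_U\mathcal{L}$ on the parameter $\epsilon$. First I would record the basic identity: since $P\mapsto\mathcal{L}(\epsilon,U,P)$ is affine and, by (H0a)(i), $U_\epsilon$ is the unique state, one has $\mathrm{g}(\epsilon)=\mathcal{L}(\epsilon,U_\epsilon,P)$ for \emph{every} $P\in Y$ (the $P$-dependent part vanishes on states), and in particular for $P=P_0$. Likewise $\mathrm{g}(0)=\mathcal{L}(0,U_0,P_0)$. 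Hence
\begin{equation*}
\mathrm{g}(\epsilon)-\mathrm{g}(0)=\bigl[\mathcal{L}(\epsilon,U_\epsilon,P_0)-\mathcal{L}(\epsilon,U_0,P_0)\bigr]+\bigl[\mathcal{L}(\epsilon,U_0,P_0)-\mathcal{L}(0,U_0,P_0)\bigr].
\end{equation*}

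**Main step.** The second bracket, divided by $\ell(\epsilon)$, converges to $\partial_\ell\mathcal{L}(0,U_0,P_0)$ by the definition of a differentiable parametrized Lagrangian (item (ii)). For the first bracket I would use the absolute continuity hypothesis (item (i) of the differentiability definition) applied to $\theta\mapsto\mathcal{L}(\epsilon,U_0+\theta(U_\epsilon-U_0),P_0)$ to write it as $\int_0^1\partial_U\mathcal{L}(\epsilon,U_0+\theta(U_\epsilon-U_0),P_0)(U_\epsilon-U_0)\,d\theta$. Now comes the key algebraic trick: insert and subtract $\partial_U\mathcal{L}(\epsilon,U_0,P_0)(U_\epsilon-U_0)$ and then $\partial_U\mathcal{L}(0,U_0,P_0)(U_\epsilon-U_0)$ inside the integral. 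The first resulting term is exactly $\ell(\epsilon)R_1^\epsilon(U_0,P_0)$; the difference $\partial_U\mathcal{L}(\epsilon,U_0,P_0)-\partial_U\mathcal{L}(0,U_0,P_0)$ applied to $U_\epsilon-U_0$ is, after dividing by $\ell(\epsilon)$, exactly $R_2^\epsilon(U_0,P_0)$; and the remaining term $\partial_U\mathcal{L}(0,U_0,P_0)(U_\epsilon-U_0)$ vanishes because, by (H0a)(ii), $P_0$ solves the adjoint equation at $\epsilon=0$, i.e. $\partial_U\mathcal{L}(0,U_0,P_0)(\varphi)=0$ for all $\varphi\in X$, and $U_\epsilon-U_0\in X$. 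Thus
\begin{equation*}
\frac{\mathrm{g}(\epsilon)-\mathrm{g}(0)}{\ell(\epsilon)}=\frac{\mathcal{L}(\epsilon,U_0,P_0)-\mathcal{L}(0,U_0,P_0)}{\ell(\epsilon)}+R_1^\epsilon(U_0,P_0)+R_2^\epsilon(U_0,P_0).
\end{equation*}

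**Conclusion.** Passing to the limit $\epsilon\to0^+$ and using the three hypotheses — existence of $\partial_\ell\mathcal{L}(0,U_0,P_0)$, existence of $R_1(U_0,P_0)$, and existence of $R_2(U_0,P_0)$ — gives $d_\ell\mathrm{g}(0)=\partial_\ell\mathcal{L}(0,U_0,P_0)+R_1(U_0,P_0)+R_2(U_0,P_0)$, as claimed. The only subtle point, and the one I would be most careful about, is the justification that $\mathrm{g}(\epsilon)=\mathcal{L}(\epsilon,U_\epsilon,P_0)$ holds for the \emph{same} $P_0$ used throughout: this relies on the affineness of $\mathcal{L}$ in $P$ together with the state equation $\partial_P\mathcal{L}(\epsilon,U_\epsilon,0)(\varphi)=0$, which forces the affine part in $P$ to be constant, hence evaluable at $P_0$; the singleton hypothesis (H0a)(i) is what makes $\mathrm{g}(\epsilon)$ unambiguously equal to this value. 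Everything else is the same telescoping/fundamental-theorem-of-calculus bookkeeping as in Theorem~\ref{deltheosalwater}, now carried out one level finer.
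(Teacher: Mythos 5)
Your proof is correct and is essentially the argument the paper defers to (its own ``proof'' is only a citation to Gangl--Sturm): the same telescoping of $\mathrm{g}(\epsilon)-\mathrm{g}(0)$ used in Theorem~\ref{deltheosalwater}, refined by inserting $\partial_U\mathcal{L}(\epsilon,U_0,P_0)(U_\epsilon-U_0)$ and $\partial_U\mathcal{L}(0,U_0,P_0)(U_\epsilon-U_0)$, the latter vanishing by the adjoint equation at $\epsilon=0$. Your closing remark correctly identifies the one point needing care, namely that $\mathrm{g}(\epsilon)=\mathcal{L}(\epsilon,U_\epsilon,P_0)$ for the fixed $P_0$, which follows from affineness in $P$ together with the state equation.
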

\begin{proof} See \cite{ganglautomated}.
\end{proof}

\noindent
Let $x_0\in D\backslash\partial\Omega$, in what follows, we no longer consider perturbations with $r$-dilatations, but rather with domains $\omega_\epsilon$ defined by $\omega_\epsilon=x_0+\epsilon\omega$ where $\omega$ is a fixed domain in $\mathbb{R}^2$. In this case, we define the perturbed domain by
\begin{eqnarray*}
	\Omega_\epsilon(x_0)=
	\begin{cases}
		\Omega\backslash\overline{\omega}_\epsilon(x_0)\;\;\text{if}\;\;x_0\in\Omega\\[0.2cm]
		\Omega\cup\omega_\epsilon(x_0)\;\;\text{if}\;\;x_0\in D\backslash\overline{\Omega}.
	\end{cases}
\end{eqnarray*}	
Without loss of generality, we deal with the case where $x_0\in D\backslash\overline{\Omega}$ or in other words $\Omega_\epsilon(x_0)=\Omega\backslash\overline{\omega}_\epsilon(x_0)$ and we can rewrite (\ref{swe1}) as follows
\begin{eqnarray*}
	\int_{0}^T\int_{D}A_1^\Omega(x,U,\nabla U)\cdot\psi \;dxdt+\int_{0}^T\int_{D}  A_2^\Omega(x,U,\nabla U):\nabla\psi \;dxdt=0
\end{eqnarray*}
Let $\psi\in E(\epsilon)$, the perturbed state equation	
\begin{align}\label{perber}
	\int_{0}^T\int_{D} A_1^{(\epsilon)}(x,U_\epsilon,\nabla U_\epsilon)\cdot\psi \;dxdt+\int_{0}^T\int_{D} A_2^{(\epsilon)}(x,U_\epsilon,\nabla U_\epsilon):\nabla\psi\; dxdt=0.
\end{align}	
By subtracting the equation (\ref{perber}) from $\epsilon>0$ and $\epsilon=0$ we have	
\begin{align*}
	&\;\int_{0}^T\int_{D}\left( A_1^{(\epsilon)}(x,U_\epsilon,\nabla U_\epsilon)-  A_1^{(0)}(x,U_0,\nabla U_0)\right)\cdot\psi \;dxdt\\&\;+\int_{0}^T\int_{D}\left(A_2^{(\epsilon)}(x,U_\epsilon,\nabla U_\epsilon)-A_2^{(0)}(x,U_0,\nabla U_0)\right):\nabla\psi \;dxdt=0.
\end{align*}			
We use the change of variables $T_\epsilon(x)=x_0+\epsilon x$ and we note by $D_\epsilon=T_\epsilon^{-1}(D)$	
\begin{align*}
	&\,\int_{0}^T\int_{D_\epsilon}\left( A_1^{(\epsilon)}(T_\epsilon(x),U_\epsilon\circ T_\epsilon,(\nabla U_\epsilon)\circ T_\epsilon)-  A_1^{(0)}(T_\epsilon(x),U_0\circ T_\epsilon,(\nabla U_0)\circ T_\epsilon)\right)\cdot(\psi\circ T_\epsilon) \;dxdt+\\&\;\int_{0}^T\int_{D_\epsilon}\left(A_2^{(\epsilon)}(T_\epsilon(x), U_\epsilon\circ T_\epsilon,(\nabla U_\epsilon)\circ T_\epsilon)-A_2^{(0)}(T_\epsilon(x),U_\epsilon\circ T_\epsilon,(\nabla U_\epsilon)\circ T_\epsilon)\right):((\nabla\psi)\circ T_\epsilon) \;dxdt=0.
\end{align*}			
Using the equality $(\nabla\varphi)\circ T_\epsilon=\frac{1}{\epsilon}\nabla(\varphi\circ T_\epsilon)$ and the first variation of the state $K_\epsilon=\frac{(U_\epsilon-U_0)\circ T_\epsilon}{\epsilon}$, we have
\begin{equation*}
	\begin{aligned}
		\int_{0}^T\int_{D_\epsilon}&\Big[ A_1^{(\epsilon)}(T_\epsilon(x),U_0\circ T_\epsilon+\epsilon K_\epsilon,(\nabla U_0)\circ T_\epsilon+\nabla K_\epsilon)\\&\qquad\qquad\qquad-A_1^{(0)}(T_\epsilon(x),U_0\circ T_\epsilon,(\nabla U_0)\circ T_\epsilon)\Big]\cdot(\psi\circ T_\epsilon) \;dxdt+\\ \int_{0}^T\int_{D_\epsilon}&\frac{1}{\epsilon}\Big[ A_2^{(\epsilon)}(T_\epsilon(x),U_0\circ T_\epsilon+\epsilon K_\epsilon,(\nabla U_0)\circ T_\epsilon+\nabla K_\epsilon)\\&\qquad\qquad\qquad-A_2^{(0)}(T_\epsilon(x),U_0\circ T_\epsilon,(\nabla U_0)\circ T_\epsilon)\Big]:\nabla(\psi\circ T_\epsilon) \;dxdt=0.
	\end{aligned}	
\end{equation*}
Multiplying by $\epsilon$, we get
\begin{equation*}
	\begin{aligned}
		\int_{0}^T\int_{D_\epsilon}&\Big[ A_1^{(\epsilon)}(T_\epsilon(x),U_0\circ T_\epsilon+\epsilon K_\epsilon,(\nabla U_0)\circ T_\epsilon+\nabla K_\epsilon)\\&\qquad\qquad\qquad -A_1^{(0)}(T_\epsilon(x),U_0\circ T_\epsilon,(\nabla U_0)\circ T_\epsilon)\Big]\cdot(\epsilon\psi\circ T_\epsilon) \;dxdt+\\ \int_{0}^T\int_{D_\epsilon}&\Big[ A_2^{(\epsilon)}(T_\epsilon(x),U_0\circ T_\epsilon+\epsilon K_\epsilon,(\nabla U_0)\circ T_\epsilon+\nabla K_\epsilon)\\&\qquad\qquad\qquad-A_2^{(0)}(T_\epsilon(x),U_0\circ T_\epsilon,(\nabla U_0)\circ T_\epsilon)\Big]:\nabla(\psi\circ T_\epsilon) \;dxdt=0.
	\end{aligned}	
\end{equation*}
\textbf{Hypothesis (Hb)}  
\begin{itemize}
	\item[(i)] We assume that $U_0$ is continuously differentiable in $x_0$.
	\item[(ii)] We assume that $\nabla K_\epsilon\rightarrow \nabla K$ and $\epsilon K_\epsilon\rightarrow 0$ where $K$ solves the equation (\ref{equationdeK}).
	\item[(iii)] For all $\epsilon> 0$ we have $\psi\in H^1(D_\epsilon\times [0,T])^3$ if and only if $\psi\circ T_\epsilon\in H^1(D\times [0,T])^3$.
\end{itemize}
Replacing $\psi\circ T_\epsilon$ by $\psi$ the previous equation becomes
\begin{small}
	\begin{align*}
		&\,\int_{0}^T\int_{D_\epsilon}\left[ A_1^{(\epsilon)}(T_\epsilon(x),U_0\circ T_\epsilon+\epsilon K_\epsilon,(\nabla U_0)\circ T_\epsilon+\nabla K_\epsilon)- A_1^{(0)}(T_\epsilon(x),U_0\circ T_\epsilon,(\nabla U_0)\circ T_\epsilon)\right]\cdot(\epsilon\psi)  \;dxdt+\\&\,\int_{0}^T\int_{D_\epsilon}\left[ A_2^{(\epsilon)}(T_\epsilon(x),U_0\circ T_\epsilon+\epsilon K_\epsilon,(\nabla U_0)\circ T_\epsilon+\nabla K_\epsilon)-A_2^{(0)}(T_\epsilon(x),U_0\circ T_\epsilon,(\nabla U_0)\circ T_\epsilon)\right]:\nabla\psi  \;dxdt=0.
	\end{align*}	
\end{small}
or again
\begin{small}
	\begin{align*}
		&\,\int_{0}^T\int_{D_\epsilon}\left[ A_1^{(\epsilon)}(T_\epsilon(x),U_0\circ T_\epsilon+\epsilon K_\epsilon,(\nabla U_0)\circ T_\epsilon+\nabla K_\epsilon)- A_1^{(\epsilon)}(T_\epsilon(x),U_0\circ T_\epsilon,(\nabla U_0)\circ T_\epsilon)\right]\cdot(\epsilon\psi)  \;dxdt+\\&\,\int_{0}^T\int_{D_\epsilon}\left[ A_2^{(\epsilon)}(T_\epsilon(x),U_0\circ T_\epsilon+\epsilon K_\epsilon,(\nabla U_0)\circ T_\epsilon+\nabla K_\epsilon)-A_2^{(\epsilon)}(T_\epsilon(x),U_0\circ T_\epsilon,(\nabla U_0)\circ T_\epsilon)\right]:\nabla\psi  \;dxdt\\&\,\qquad\quad=\int_{0}^T\int_{D_\epsilon}\left( A_1^{(0)}- A_1^{(\epsilon)}\right)\left[(T_\epsilon(x),U_0\circ T_\epsilon,(\nabla U_0)\circ T_\epsilon)\right]\cdot(\epsilon\psi)  \;dxdt
		\\
		&\,\qquad\quad+\int_{0}^T\int_{D_\epsilon}\left(A_2^{(0)}-A_2^{(\epsilon)}\right)(T_\epsilon(x),U_0\circ T_\epsilon,(\nabla U_0)\circ T_\epsilon):\nabla\psi  \;dxdt.
	\end{align*}	
\end{small}
Now we can write
\begin{align*}
	&\,\int_{0}^T\int_{D_\epsilon}\left( A_1^{(0)}- A_1^{(\epsilon)}\right)\left[(T_\epsilon(x),U_0\circ T_\epsilon,(\nabla U_0)\circ T_\epsilon)\right]\cdot(\epsilon\psi)  \;dxdt\\
	&\,\qquad=\int_{0}^T\int_{\omega}\left( A_1^{int}-A_1^{ext} \right)(T_\epsilon(x),U_0\circ T_\epsilon,(\nabla U_0)\circ T_\epsilon)\cdot(\epsilon\psi)  \;dxdt.
\end{align*}
In the same way we show that
\begin{align*}
	&\;\int_{0}^T\int_{D_\epsilon}\left(A_2^{(0)}-A_2^{(\epsilon)}\right)\left[(T_\epsilon(x),U_0\circ T_\epsilon,(\nabla U_0)\circ T_\epsilon)\right]:\nabla\psi  \;dxdt\\&\,\quad\qquad=
	\int_{0}^T\int_{\omega}\left(A_2^{int}-A_2^{ext} \right)\left(T_\epsilon(x),U_0\circ T_\epsilon,(\nabla U_0)\circ T_\epsilon\right):\nabla\psi  \;dxdt,\quad\text{for all $H^1(D_\epsilon\times [0,T])^3$}.
\end{align*}
Finally we get the following equality
\begin{align*}
	&\, \int_{0}^T\int_{D_\epsilon}\left[ A_1^{(\epsilon)}(T_\epsilon(x),U_0\circ T_\epsilon+\epsilon K_\epsilon,(\nabla U_0)\circ T_\epsilon+\nabla K_\epsilon)- A_1^{(\epsilon)}(T_\epsilon(x),U_0\circ T_\epsilon,(\nabla U_0)\circ T_\epsilon)\right]\cdot(\epsilon\psi)  \;dxdt+\\&\, \int_{0}^T\int_{D_\epsilon}\left[ A_2^{(\epsilon)}(T_\epsilon(x),U_0\circ T_\epsilon+\epsilon K_\epsilon,(\nabla U_0)\circ T_\epsilon+\nabla K_\epsilon)-A_2^{(\epsilon)}(T_\epsilon(x),U_0\circ T_\epsilon,(\nabla U_0)\circ T_\epsilon)\right]:\nabla\psi  \;dxdt\\&\;\quad =\int_{0}^T\int_{\omega}\left( A_1^{int}- A_1^{ext} \right)(T_\epsilon(x),U_0\circ T_\epsilon,(\nabla U_0)\circ T_\epsilon)\cdot(\epsilon\psi)  \;dxdt\\
	&\,\qquad  +\int_{0}^T\int_{\omega}\left(A_2^{int}-A_2^{ext} \right)\left(T_\epsilon(x),U_0\circ T_\epsilon,(\nabla U_0)\circ T_\epsilon\right):\nabla\psi  \;dxdt.
\end{align*}	
Going to the limit when $\epsilon$ goes to zero, we get
\begin{align*}
	&\,\int_{0}^T\int_{\mathbb{R}^2}\left[ A_2^{\omega}(x_0,U_0(x_0),\nabla U_0(x_0)+\nabla K)-A_2^{\omega}(x_0,U_0(x_0),\nabla U_0(x_0))\right]:\nabla\psi \;dxdt\\&\,\quad= \int_{0}^T\int_{\omega}\left(A_2^{int}-A_2^{ext} \right)(x_0,U_0(x_0),\nabla U_0(x_0)):\nabla\psi \;dxdt.
\end{align*}
Since $A_2^{ext}$ is zero, we have
\begin{align*}
	\nonumber	&\,\int_{0}^T\int_{\omega}\left[ A_2^{int}(x_0,U_0(x_0),\nabla U_0(x_0)+\nabla K)-A_2^{int}(x_0,U_0(x_0),\nabla U_0(x_0))\right]:\nabla\psi  \;dxdt\\&\,\qquad\quad= \int_{0}^T\int_{\omega} A_2^{int}(x_0,U_0(x_0),\nabla U_0(x_0)):\nabla\psi\;dxdt  
\end{align*}
This implies that 
\begin{align*}
	&\;-\int_0^T\int_{\omega}Q(\alpha)(\nabla U_0+\nabla K):\nabla\psi  \;dxdt+\int_0^T\int_{\omega}Q(\alpha)\nabla U_0:\nabla\psi  \;dxdt\\&\;\quad=
	-\int_0^T\int_{\omega}Q(\alpha)\nabla U_0:\nabla\psi  \;dxdt,
\end{align*}
i.e.
\begin{align}\label{equationdeK}
	&\;\int_0^T\int_{\omega}Q(\alpha)\nabla K:\nabla\psi  \;dxdt=
	\int_0^T\int_{\omega}Q(\alpha)\nabla U_0:\nabla\psi  \;dxdt,\;\;\text{for all $\psi\in H^1(\mathbb{R}^2\times[0,T])^3$}
\end{align}
which is the equation of the corrector $K$. 
In what follows we use the following abbreviation
\begin{eqnarray*}
	&\,& a_1(\epsilon,U,P):=\int_{0}^T\int_{D}  A_1^{(\epsilon)}(U,\nabla U)\cdot P  \;dxdt,\qquad a_2(\epsilon,U,P):=\int_{0}^T\int_{D}A_2^{(\epsilon)}(U,\nabla U):\nabla P  \;dxdt\\
	&\,&\qquad\mathcal{J}(\epsilon,U):=J(\Omega_\epsilon,U,\nabla U)=\int_{0}^T\int_{D} j^{(\epsilon)}(x,U,\nabla U) \;dxdt.
\end{eqnarray*}
In this case the Lagrangian is written
\begin{eqnarray*}
	\mathcal{L}(\epsilon,U,P)=\mathcal{J}(\epsilon,U)+a_1(\epsilon,U,P)+a_2(\epsilon,U,P).
\end{eqnarray*}
It is clear that the perturbed state equation is exactly the equation (\ref{perber}). Indeed, according to the Definition \ref{defstatandadjoint}, the perturbed state equation is defined by $\partial_P\mathcal{L}(\epsilon,U_\epsilon,0)(\psi)=0$ for all $\psi\in E(\epsilon)$. So we have
\begin{eqnarray*}
	\partial_P\lag(\epsilon,U_\epsilon,0)(\psi)&=&\partial_P a_1(\epsilon,U_\epsilon,0)(\psi)+\partial_P a_2(\epsilon,U_\epsilon,0)(\psi)\\&=&a_1(\epsilon,U_\epsilon,\psi)+ a_2(\epsilon,U_\epsilon,\psi)\\&=& \int_{0}^T\int_{D}  A_1^{(\epsilon)}(U_\epsilon,\nabla U_\epsilon)\cdot \psi  \;dxdt+\int_{0}^T\int_{D}A_2^{(\epsilon)}(U_\epsilon,\nabla U_\epsilon):\nabla\psi   \;dxdt.
\end{eqnarray*} 
Then $\partial_P\lag(\epsilon,U_\epsilon,0)(\psi)=0$ implies
\begin{eqnarray*}
	\int_{0}^T\int_{D}  A_1^{(\epsilon)}(U_\epsilon,\nabla U_\epsilon)\cdot \psi  \;dxdt+\int_{0}^T\int_{D}A_2^{(\epsilon)}(U_\epsilon,\nabla U_\epsilon):\nabla\psi   \;dxdt=0.
\end{eqnarray*}
Similarly, the adjoint state equation at $\epsilon=0$ is that which satisfies $\partial_U\lag(0,U_0,P_0)(\psi)=0$. First, let us use $\partial_1\mathcal{L}$ and $\partial_2\mathcal{L}$ to denote the respective derivative of $\mathcal{L}$ with respect to the first and second arguments or simply with respect to $U$ and $\nabla U$. Then
\begin{align*}
	\partial_U\mathcal{L}(0,U_0,P_0)(\psi)&=\partial_U\mathcal{J}(0,U_0)+\partial_Ua_1(0,U_0,P_0)(\psi)+\partial_Ua_2(0,U_0,P_0)(\psi)\\&= \int_{0}^T\int_{D} \partial_U A_1^{(0)}(U_0,\nabla U_0)(\psi)\cdot P_0  \;dxdt+\int_{0}^T\int_{D}\partial_UA_2^{(0)}(U_0,\nabla U_0)(\psi):\nabla P_0  \;dxdt\\
	&= \int_{0}^T\int_{D} \partial_1 A_1^{(0)}(U_0,\nabla U_0)(\psi)\cdot P_0  \;dxdt+\int_{0}^T\int_{D}\partial_1A_2^{(0)}(U_0,\nabla U_0)(\psi):\nabla P_0  \;dxdt\\
	&+ \int_{0}^T\int_{D} \partial_2 A_1^{(0)}(U_0,\nabla U_0)(\nabla\psi)\cdot P_0  \;dxdt+\int_{0}^T\int_{D}\partial_2A_2^{(0)}(U_0,\nabla U_0)(\nabla\psi):\nabla P_0  \;dxdt.\\
\end{align*}
Hence the adjoint state equation at $\epsilon=0$ is defined by
\begin{equation}\label{adjoitogang}
	\begin{aligned}
		&\, \int_{0}^T\int_{D} \partial_1 A_1^{(0)}(U_0,\nabla U_0)(\psi)\cdot P_0  \;dxdt+\int_{0}^T\int_{D}\partial_1A_2^{(0)}(U_0,\nabla U_0)(\psi):\nabla P_0  \;dxdt\\
		&\, +\int_{0}^T\int_{D} \partial_2 A_1^{(0)}(U_0,\nabla U_0)(\nabla\psi)\cdot P_0  \;dxdt+\int_{0}^T\int_{D}\partial_2A_2^{(0)}(U_0,\nabla U_0)(\nabla\psi):\nabla P_0  \;dxdt=0.\\
	\end{aligned}
\end{equation}
Now in the following theorem we state the main result:
\begin{theorem}\label{maintheogang}
	Assume that the perturbed state equation (\ref{perber})  and the adjoint state equation (\ref{adjoitogang}) admit each a unique solution. Then, under the Hypotheses (H0a) and (Hb), the topological derivative of the objective $J(\Omega)$ is given by
	\begin{align*}
		D_TJ(\Omega)(x_0)&=\frac{1}{\vert\omega\vert}\int_{0}^T\int_{\mathbb{R}^2}\left(A_1^{\omega}(U_0(x_0),\nabla U_0(x_0)+\nabla K)- A_1^{\omega}(U_0(x_0),\nabla U_0(x_0)) \right)\cdot(P_0(x_0))  \;dxdt\\
		&-\frac{1}{\vert\omega\vert}\int_{0}^T\int_{\mathbb{R}^2}\partial_2 A_1^{\omega}(U_0(x_0),\nabla U_0(x_0))(\nabla K)\cdot(P_0(x_0))  \;dxdt\\
		&-\frac{1}{\vert\omega\vert}\int_{0}^T\int_{\omega}\left[\partial_2 A_1^{int}(U_0(x_0),\nabla U_0(x_0))\right](\nabla K)\cdot(P_0(x_0))  \;dxdt
		\\&-\frac{1}{\vert\omega\vert}\int_0^T\int_{\omega}Q(\alpha)\nabla U_0:\nabla P_0 \;dxdt\\&
		-\int_{0}^T\left[\partial_t(U_0)\cdot P_0+(\nabla\cdot(F(U_0)\cdot P_0-S(U_0)\cdot P_0\right](x_0,t)\;dt\\&-\int_{0}^T\left[ Q(\alpha)\nabla U_0:\nabla P_0\right](x_0,t)\;dt-\int_{0}^T\vert \nabla U_0\vert^2\;dt- \int_{0}^T\left(U_0(x_0,t)-U_d(x_0,t)\right)^2\;dt
	\end{align*}
	where $P_0$ is the adjoint state for $\epsilon=0$ solution to (\ref{adjoitogang}) and the corrector $K$ is solution to the  equation (\ref{equationdeK}).
\end{theorem}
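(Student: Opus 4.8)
The plan is to obtain Theorem \ref{maintheogang} as a direct specialization of Theorem \ref{theoderivegang} to the Lagrangian $\mathcal{L}(\epsilon,U,P)=\mathcal{J}(\epsilon,U)+a_1(\epsilon,U,P)+a_2(\epsilon,U,P)$ with parameter function $\ell(\epsilon)=|\omega_\epsilon|=\epsilon^2|\omega|$. First I would check that the hypotheses of Theorem \ref{theoderivegang} are met: Hypothesis (H0a) together with the assumed uniqueness of the perturbed state and of the adjoint state at $\epsilon=0$ gives that $E(\epsilon)=\{U_\epsilon\}$ and $Y(0,U_0)=\{P_0\}$ are singletons, and the absolute-continuity requirement in the definition of a differentiable Lagrangian holds because $\mathcal{L}$ is polynomial in $U$ on the admissible set $\{h>0\}$. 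Once the limits $R_1(U_0,P_0)$ and $R_2(U_0,P_0)$ are shown to exist, Theorem \ref{theoderivegang} yields
\[ D_TJ(\Omega)(x_0)=d_\ell\mathrm{g}(0)=\partial_\ell\mathcal{L}(0,U_0,P_0)+R_1(U_0,P_0)+R_2(U_0,P_0), \]
so the proof reduces to identifying these three quantities with the three blocks of the claimed formula.

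For $\partial_\ell\mathcal{L}(0,U_0,P_0)$ I would form $\mathcal{L}(\epsilon,U_0,P_0)-\mathcal{L}(0,U_0,P_0)$ and note that, since $\chi_{\Omega_\epsilon}$ and $\chi_\Omega$ differ only on $\omega_\epsilon$ and $j^{ext}=0$, $A_2^{ext}=0$, this difference equals $\pm\int_0^T\int_{\omega_\epsilon}[\,j^{int}+A_1^{int}\!\cdot\!P_0+A_2^{int}\!:\!\nabla P_0\,]\,dxdt$ with the operators evaluated at $(U_0,\nabla U_0)$. Dividing by $\ell(\epsilon)=|\omega_\epsilon|$ and letting $\epsilon\to0$, the Lebesgue differentiation theorem (the integrands being continuous near $x_0$ by Hypothesis (Hb)(i)) replaces $\tfrac{1}{|\omega_\epsilon|}\int_{\omega_\epsilon}$ by evaluation at $x_0$; substituting $A_1^{int}=\partial_t U_0+\nabla\!\cdot\!F(U_0)-S(U_0)$, $A_2^{int}=Q(\alpha)\nabla U_0$ and $j^{int}=|\nabla U_0|^2+|U_0-U_d|^2$ produces precisely the last block of the theorem, with the sign corresponding to removal of material.

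For $R_2(U_0,P_0)$ I would use that $\partial_U\mathcal{L}(\epsilon,U_0,P_0)-\partial_U\mathcal{L}(0,U_0,P_0)$ is again supported on $\omega_\epsilon$; applying it to $U_\epsilon-U_0$, performing the change of variables $T_\epsilon(x)=x_0+\epsilon x$ (Jacobian $\epsilon^2$, so that $\tfrac{1}{\ell(\epsilon)}\epsilon^2=\tfrac{1}{|\omega|}$), and passing to the limit using Hypothesis (Hb)(ii) ($\nabla K_\epsilon\to\nabla K$, $\epsilon K_\epsilon\to0$) together with $U_0\circ T_\epsilon\to U_0(x_0)$, $(\nabla U_0)\circ T_\epsilon\to\nabla U_0(x_0)$, $P_0\circ T_\epsilon\to P_0(x_0)$, yields the two $\int_0^T\int_\omega$ terms, namely the one with $\partial_2 A_1^{int}(U_0(x_0),\nabla U_0(x_0))(\nabla K)\cdot P_0(x_0)$ and the one with $Q(\alpha)\nabla U_0:\nabla P_0$. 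For $R_1(U_0,P_0)$ the same rescaling is applied, but now the integration domain $D_\epsilon=T_\epsilon^{-1}(D)$ opens to all of $\mathbb{R}^2$ and the difference $\partial_U\mathcal{L}(\epsilon,\theta U_\epsilon+(1-\theta)U_0,P_0)-\partial_U\mathcal{L}(\epsilon,U_0,P_0)$ is kept to second order; integrating in $\theta\in[0,1]$ recombines the linearizations into the finite difference $A_1^\omega(U_0(x_0),\nabla U_0(x_0)+\nabla K)-A_1^\omega(U_0(x_0),\nabla U_0(x_0))-\partial_2 A_1^\omega(U_0(x_0),\nabla U_0(x_0))(\nabla K)$, which gives the first two lines of the formula; here $K$ is the corrector solving \eqref{equationdeK}, obtained earlier by this very localization of the state equation. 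Summing the three contributions yields the stated expression.

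The hard part will be the limit passages inside the rescaled integrals defining $R_1$ and $R_2$: one must show that the contributions from $D_\epsilon$ outside a fixed ball are negligible, that the convergences $\nabla K_\epsilon\to\nabla K$ and $\epsilon K_\epsilon\to0$ may be used under the integral sign (a dominated-convergence or equi-integrability argument), and — most delicately — that the flux nonlinearities $q_1^2/h$, $q_1q_2/h$, $q_2^2/h$ in $F(U)$ keep the difference quotients and their linearizations bounded, which requires a uniform lower bound $h,h^s\geq c>0$ on the water height and the admissibility of the convex combinations $\theta U_\epsilon+(1-\theta)U_0$. These are exactly the facts that Hypothesis (Hb) and the standing regularity assumptions on $U_0$ are meant to provide, so once they are in place the rest is bookkeeping of the algebra of $A_1^\Omega$, $A_2^\Omega$ and $j^\Omega$.
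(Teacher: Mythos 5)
Your proposal follows essentially the same route as the paper: apply Theorem \ref{theoderivegang} with $\ell(\epsilon)=\vert\omega_\epsilon\vert$, split $\partial_\ell\mathcal{L}$, $R_1$ and $R_2$ into their $\mathcal{J}$, $a_1$, $a_2$ contributions, localize each difference to $\omega_\epsilon$, rescale by $T_\epsilon$ (Jacobian $\epsilon^2$), pass to the limit using Hypothesis (Hb), and recombine the $\theta$-integral of $\partial_2A_1^\omega$ into the finite difference that gives the first two lines. The identification of the individual blocks, including the vanishing of the $\mathcal{J}$- and $A_2$-contributions to $R_1$ and the sign from removal of material, matches the paper's computation, so the proposal is correct and not materially different.
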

\begin{proof} 
	To demonstrate this result, we directly apply the Theorem \ref{theoderivegang}. The individual terms are described in the following:
	\begin{align*}
		R_1^{A_1}(U_0,P_0)&=\lim_{\epsilon\rightarrow 0^+}\frac{1}{\vert \omega_\epsilon\vert}\int_{0}^1\left[ \partial_U a_1(\epsilon,U_0+\theta(U_\epsilon-U_0),P_0)-\partial_U a_1(\epsilon,U_0,P_0) \right](U_\epsilon-U_0) \;d\theta\\
		&=\lim_{\epsilon \to 0^+}\;\frac{1}{\vert \omega_\epsilon\vert}R_{1,\epsilon}^{A_1}.
	\end{align*}
	\begin{align*}
		R_{1,\epsilon}&=\int_{0}^1\left[ \partial_U \mathcal{L}(\epsilon,U_0+\theta(U_\epsilon-U_0),P_0)-\partial_U \mathcal{L}(\epsilon,U_0,P_0) \right](U_\epsilon-U_0) \;d\theta\\
		&=\int_{0}^1\left[ \partial_U \mathcal{J}(\epsilon,U_0+\theta(U_\epsilon-U_0),P_0)-\partial_U \mathcal{J}(\epsilon,U_0,P_0) \right](U_\epsilon-U_0) \;d\theta\\
		&+\int_{0}^1\left[ \partial_U a_1(\epsilon,U_0+\theta(U_\epsilon-U_0),P_0)-\partial_U a_1(\epsilon,U_0,P_0) \right](U_\epsilon-U_0) \;d\theta\\
		&+\int_{0}^1\left[ \partial_U a_2(\epsilon,U_0+\theta(U_\epsilon-U_0),P_0)-\partial_U a_2(\epsilon,U_0,P_0) \right](U_\epsilon-U_0) \;d\theta\\
		&=R_{1,\epsilon}^{\mathcal{J}}+R_{1,\epsilon}^{A_1}+R_{1,\epsilon}^{A_2}.
	\end{align*}
	The function $R_{2,\epsilon}$ is defined by
	\begin{align*}
		R_{2,\epsilon}&=\left( \partial_U \lag(\epsilon,U_0,P_0)-\partial_U \lag(0,U_0,P_0) \right)(U_\epsilon-U_0)\\
		&=\left( \partial_U\mathcal{J} (\epsilon,U_0,P_0)-\partial_U \mathcal{J}(0,U_0,P_0) \right)(U_\epsilon-U_0)\\
		&+\left( \partial_U a_1(\epsilon,U_0,P_0)-\partial_U a_1(0,U_0,P_0) \right)(U_\epsilon-U_0)\\
		&+\left( \partial_U a_2(\epsilon,U_0,P_0)-\partial_U a_2(0,U_0,P_0) \right)(U_\epsilon-U_0)\\
		&=R_{2,\epsilon}^{\mathcal{J}}+R_{2,\epsilon}^{A_1}+R_{2,\epsilon}^{A_2}.
	\end{align*}
	\begin{align*}
		\partial_\ell \mathcal{L}(0,U_0,P_0)&=\lim_{\epsilon \to 0^+}\frac{1}{\vert \omega_\epsilon\vert}\left(\mathcal{L}(\epsilon,U_0,P_0)-\mathcal{L}(0,U_0,P_0)\right)\\&=\lim_{\epsilon \to 0^+}\frac{1}{\vert \omega_\epsilon\vert}\left(\mathcal{J}(\epsilon,U_0,P_0)-\mathcal{J}(0,U_0,P_0)\right) +\lim_{\epsilon \to 0^+}\frac{1}{\vert \omega_\epsilon\vert}\left( a_1(\epsilon,U_0,P_0)- a_1(0,U_0,P_0) \right)
		\\
		&+\lim_{\epsilon \to 0^+}\frac{1}{\vert \omega_\epsilon\vert}\left( a_2(\epsilon,U_0,P_0)-a_2(0,U_0,P_0) \right)
		\\&=\partial_\ell \mathcal{L}^{\mathcal{J}}(U_0,P_0)+\partial_\ell \mathcal{L}^{A_1}(U_0,P_0)+\partial_\ell \mathcal{L}^{A_2}(U_0,P_0).
	\end{align*}
	\textbf{Computation of $R_1^{A_1}$}\\
	\begin{equation*}
		\begin{aligned}
			R_{1,\epsilon}^{A_1}&=\int_{0}^1\big( \partial_U a_1(\epsilon,U_0+\theta(U_\epsilon-U_0),P_0)-\partial_U a_1(\epsilon,U_0,P_0) \big)(U_\epsilon-U_0) \;d\theta\\
			&=\int_{0}^1\int_{0}^T\int_{D}\Big(\partial_1  A_1^{(\epsilon)}(U_0+\theta(U_\epsilon-U_0),\nabla U_0+\theta\nabla(U_\epsilon-U_0))\\&\qquad\qquad\qquad\qquad\qquad\qquad\qquad\qquad-\partial_1 A_1^{(\epsilon)}(U_0,\nabla U_0) \Big)(U_\epsilon-U_0)\cdot P_0 \;dxdtd\theta\\
			&+\int_{0}^1\int_{0}^T\int_{D}\Big(\partial_2  A_1^{(\epsilon)}(U_0+\theta(U_\epsilon-U_0),\nabla U_0+\theta\nabla(U_\epsilon-U_0))\\&\qquad\qquad\qquad\qquad\qquad\qquad\qquad\qquad-\partial_2 A_1^{(\epsilon)}(U_0,\nabla U_0) \Big)(\nabla(U_\epsilon-U_0))\cdot P_0 \;dxdtd\theta.
		\end{aligned}
	\end{equation*}
	Using the change of variables taking into account that $(\nabla\varphi)\circ T_\epsilon=\frac{1}{\epsilon}\nabla(\varphi\circ T_\epsilon)$ and $K_\epsilon=\frac{(U_\epsilon-U_0)\circ T_\epsilon}{\epsilon}$, we obtain
	\begin{eqnarray*}
		\begin{aligned}
			R_{1,\epsilon}^{A_1}=\epsilon^{2}\int_{0}^1\int_{0}^T\int_{D_\epsilon}&\Big(\partial_1A_1^{\omega}(U_0\circ T_\epsilon+\theta\epsilon K_\epsilon,(\nabla U_0)\circ T_\epsilon+\theta\nabla K_\epsilon)\\&\qquad\qquad-\partial_1 A_1^{\omega}(U_0\circ T_\epsilon,(\nabla U_0)\circ T_\epsilon) \Big)(\epsilon K_\epsilon)\cdot(P_0\circ T_\epsilon)  \;dxdtd\theta\\
			+\epsilon^{2}\int_{0}^1\int_{0}^T\int_{D_\epsilon}&\Big(\partial_2 A_1^{\omega}(U_0\circ T_\epsilon+\theta\epsilon K_\epsilon,(\nabla U_0)\circ T_\epsilon+\theta\nabla K_\epsilon)\\&\qquad\qquad-\partial_2A_1^{\omega}(U_0\circ T_\epsilon,(\nabla U_0)\circ T_\epsilon) \Big)(\nabla K_\epsilon)\cdot(P_0\circ T_\epsilon)  \;dxdtd\theta.\\
		\end{aligned}
	\end{eqnarray*}
	Then dividing by $\vert\omega_\epsilon\vert$ and tending $\epsilon$ to zero, we get
	\begin{equation*}
		\begin{aligned}
			R_{1}^{A_1}(U_0,P_0)=\frac{1}{\vert\omega\vert}\int_{0}^1\int_{0}^T\int_{\mathbb{R}^2}&\Big(\partial_2 A_1^{\omega}(U_0(x_0),\nabla U_0(x_0)+\theta\nabla K) \\&\qquad\qquad-\partial_2A_1^{\omega}(U_0(x_0),\nabla U_0(x_0)) \Big)(\nabla K)\cdot(P_0(x_0))  \;dxdtd\theta.\\
		\end{aligned}
	\end{equation*}
	However, we have
	\begin{align*}
		&\,\int_{0}^1\int_{0}^T\int_{\mathbb{R}^2}\partial_2 A_1^{\omega}(U_0(x_0),\nabla U_0(x_0)+\theta\nabla K)(\nabla K)(P_0(x_0))  \;dxdtd\theta\\&\,\qquad=\int_{0}^T\int_{\mathbb{R}^2}\left( A_1^{\omega}(U_0(x_0),\nabla U_0(x_0)+\nabla K)- A_1^{\omega}(U_0(x_0),\nabla U_0(x_0))\right)\cdot(P_0(x_0))  \;dxdt.
	\end{align*}
	So we get
	\begin{align*}
		R_{1}^{A_1}(U_0,P_0)&=\frac{1}{\vert\omega\vert}\int_{0}^T\int_{\mathbb{R}^2}\left(A_1^{\omega}(U_0(x_0),\nabla U_0(x_0)+\nabla K)- A_1^{\omega}(U_0(x_0),\nabla U_0(x_0)) \right)\cdot(P_0(x_0))  \;dxdt\\
		&-\frac{1}{\vert\omega\vert}\int_{0}^T\int_{\mathbb{R}^2}\partial_2 A_1^{\omega}(U_0(x_0),\nabla U_0(x_0))(\nabla K)\cdot(P_0(x_0))  \;dxdt.
	\end{align*}
	The function $R_{2,\epsilon}^{A_1}$ is defined by
	\begin{eqnarray*}
		R_{2,\epsilon}^{A_1}&=&\Big( \partial_U a_1(\epsilon,U_0,P_0)-\partial_U a_1(0,U_0,P_0) \Big)(U_\epsilon-U_0)\\
		&=&\int_{0}^T\int_{D}\left(\partial_1 A_1^{(\epsilon)}(U_0,\nabla U_0)-\partial_1 A_1^{(0)}(U_0,\nabla U_0) \right)(U_\epsilon-U_0)\cdot P_0 \;dxdt\\
		&+&\int_{0}^T\int_{D}\left(\partial_2 A_1^{(\epsilon)}(U_0,\nabla U_0)-\partial_2 A_1^{(0)}(U_0,\nabla U_0) \right)(\nabla(U_\epsilon-U_0))\cdot P_0  \;dxdt.
	\end{eqnarray*}
	The change of variables gives
	\begin{equation*}
		\begin{aligned}
			R_{2,\epsilon}^{A_1}
			=\epsilon^{2}\int_{0}^T\int_{D_\epsilon}&\Big(\partial_1 A_1^{(\epsilon)}(U_0\circ T_\epsilon,(\nabla U_0)\circ T_\epsilon)\\&\qquad\qquad\qquad-\partial_1A_1^{(0)}(U_0\circ T_\epsilon,(\nabla U_0)\circ T_\epsilon) \Big)(U_\epsilon-U_0)\circ T_\epsilon \cdot(P_0\circ T_\epsilon)  \;dxdt\\
			+\epsilon^{2}\int_{0}^T\int_{D_\epsilon}&\Big(\partial_2 A_1^{(\epsilon)}(U_0\circ T_\epsilon,(\nabla U_0)\circ T_\epsilon)\\&\qquad\qquad\qquad-\partial_2A_1^{(0)}(U_0\circ T_\epsilon,(\nabla U_0)\circ T_\epsilon) \Big)(\nabla(U_\epsilon-U_0))\circ T_\epsilon \cdot(P_0\circ T_\epsilon)  \;dxdt
			\\
			=\epsilon^{2}\int_{0}^T\int_{D_\epsilon}&\left(\partial_1 A_1^{(\epsilon)}(U_0\circ T_\epsilon,(\nabla U_0)\circ T_\epsilon)-\partial_1 A_1^{(0)}(U_0\circ T_\epsilon,(\nabla U_0)\circ T_\epsilon) \right)(\epsilon K_\epsilon)\cdot(P_0\circ T_\epsilon)  \;dxdt\\
			+\epsilon^{2}\int_{0}^T\int_{D_\epsilon}&\left(\partial_2 A_1^{(\epsilon)}(U_0\circ T_\epsilon,(\nabla U_0)\circ T_\epsilon)-\partial_2 A_1^{(0)}(U_0\circ T_\epsilon,(\nabla U_0)\circ T_\epsilon) \right)(\nabla K_\epsilon)\cdot(P_0\circ T_\epsilon)  \;dxdt.	
		\end{aligned}
	\end{equation*}
	We have the following equality
	\begin{align*}
		&\, \epsilon^{2}\int_{0}^T\int_{D_\epsilon}\left(\partial_2 A_1^{(\epsilon)}(U_0\circ T_\epsilon,(\nabla U_0)\circ T_\epsilon)-\partial_2A_1^{(0)}(U_0\circ T_\epsilon,(\nabla U_0)\circ T_\epsilon) \right)(\nabla K_\epsilon)\cdot(P_0\circ T_\epsilon)  \;dxdt\\
		&=\int_{0}^T\int_{D}\left(\partial_2 A_1^{(\epsilon)}(U_0,\nabla U_0)-\partial_2 A_1^{(0)}(U_0,\nabla U_0) \right)(\nabla K_\epsilon)\circ T^{-1}_\epsilon \cdot P_0  \;dxdt\\
		&= \int_{0}^T\int_{D}\left[\left(\chi_{\Omega_\epsilon}\partial_2 A_1^{int}+\chi_{D\backslash\Omega_\epsilon}\partial_2 A_1^{ext}-\chi_{\Omega}\partial_2 A_1^{int}-\chi_{D\backslash\Omega}\partial_2 A_1^{ext}  \right)(U_0,\nabla U_0)\right](\nabla K_\epsilon)\circ T^{-1}_\epsilon \cdot P_0 \; dxdt\\
		&=-\int_{0}^T\int_{\omega_\epsilon}\left[(\partial_2 A_1^{int}-\partial_2 A_1^{ext})(U_0,\nabla U_0)\right](\nabla K_\epsilon)\circ T^{-1}_\epsilon \cdot P_0  \;dxdt\\
		&= -\epsilon^{2}\int_{0}^T\int_{\omega}\left[(\partial_2 A_1^{int}-\partial_2 A_1^{ext})(U_0\circ T_\epsilon,(\nabla U_0)\circ T_\epsilon)\right](\nabla K_\epsilon)\cdot(P_0\circ T_\epsilon)  \;dxdt.
	\end{align*}
	Therefore $R_{2,\epsilon}^{A_1}$ becomes
	\begin{align*}
		R_{2,\epsilon}^{A_1}
		&=-\epsilon^{2}\int_{0}^T\int_{\omega}\left[(\partial_1 A_1^{int}-\partial_1 A_1^{ext})(U_0\circ T_\epsilon,(\nabla U_0)\circ T_\epsilon)\right](\epsilon K_\epsilon)\cdot(P_0\circ T_\epsilon)  \;dxdt\\
		&=-\epsilon^{2}\int_{0}^T\int_{\omega}\left[(\partial_2 A_1^{int}-\partial_2 A_1^{ext})(U_0\circ T_\epsilon,(\nabla U_0))\right](\nabla K_\epsilon)\cdot(P_0\circ T_\epsilon) \; dxdt.
	\end{align*}
	Dividing by $\vert\omega_\epsilon\vert$ and tending $\epsilon$ to zero, we get
	\begin{eqnarray*}
		R_{2}^{A_1}(U_0,P_0)=-\frac{1}{\vert\omega\vert}\int_{0}^T\int_{\omega}\left[(\partial_2 A_1^{int}-\partial_2 A_1^{ext})(U_0(x_0),\nabla U_0(x_0))\right](\nabla K)\cdot(P_0(x_0))  \;dxdt.
	\end{eqnarray*}
	Since $A_1^{ext}=0$ then
	\begin{eqnarray*}
		R_{2}^{A_1}(U_0,P_0)=-\frac{1}{\vert\omega\vert}\int_{0}^T\int_{\omega}\left[\partial_2 A_1^{int}(U_0(x_0),\nabla U_0(x_0))\right](\nabla K)\cdot(P_0(x_0))  \;dxdt.
	\end{eqnarray*}
	\begin{align*}
		\partial_\ell\mathcal{L}^{A_1}(U_0,P_0)&=\lim_{\epsilon \to 0^+}\frac{1}{\vert\omega_\epsilon\vert}\left(a_1(\epsilon,U_0,P_0)-a_1(0,U_0,P_0)\right)\\&=\lim_{\epsilon \to 0^+}\frac{1}{\vert\omega_\epsilon\vert}\int_{0}^T\int_{D}\left( A_1^{(\epsilon)}(U_0,\nabla U_0)- A_1^{(0)}(U_0,\nabla U_0)\right)\cdot P_0 \; dxdt
		\\&=-\lim_{\epsilon \to 0^+}\frac{1}{\vert\omega_\epsilon\vert}\int_{0}^T\int_{\omega_\epsilon}\left( A_1^{int}(U_0,\nabla U_0)- A_1^{ext}(U_0,\nabla U_0)\right)\cdot P_0  \;dxdt
		\\&=-\lim_{\epsilon \to 0^+}\frac{\epsilon^2}{\vert\omega_\epsilon\vert}\int_{0}^T\int_{\omega}\left( A_1^{int}(U_0\circ T_\epsilon,(\nabla U_0)\circ T_\epsilon)- A_1^{ext}(U_0\circ T_\epsilon,(\nabla U_0)\circ T_\epsilon)\right)\cdot(P_0\circ T_\epsilon) \;dxdt
		\\&=-\frac{1}{\vert\omega\vert}\int_{0}^T\int_{\omega}\left[ A_1^{int}(U_0(x_0),\nabla U_0(x_0))- A_1^{ext}(U_0(x_0),\nabla U_0(x_0))\right]\cdot(P_0(x_0)) \;dxdt
		\\&=-\int_{0}^T\left[ A_1^{int}(U_0(x_0),\nabla U_0(x_0))- A_1^{ext}(U_0(x_0),\nabla U_0(x_0))\right]\cdot(P_0(x_0)) \;dt.
	\end{align*}
	Finally we have
	\begin{eqnarray*}
		\partial_\ell\mathcal{L}^{A_1}(U_0,P_0)=-\int_{0}^T\left[\partial_t(U_0)\cdot P_0+(\nabla\cdot(F(U_0)))\cdot P_0-S(U_0)\cdot P_0\right](x_0,t)\;dt.
	\end{eqnarray*}
	\begin{equation*}
		\begin{aligned}
			R_{1,\epsilon}^{A_2}=\int_{0}^1\Big( \partial_U a_2(&\epsilon,U_0+\theta(U_\epsilon-U_0),P_0)-\partial_U a_2(\epsilon,U_0,P_0) \Big)(U_\epsilon-U_0) \;d\theta\\
			=\int_{0}^1\int_{0}^T\int_{D}&\Big(\partial_1 A_2^{(\epsilon)}(U_0+\theta(U_\epsilon-U_0,\nabla U_0+\theta\nabla(U_\epsilon-U_0))\\&\qquad\qquad\qquad\qquad\qquad\qquad-\partial_1 A_2^{(\epsilon)}(U_0,\nabla U_0) \Big)(U_\epsilon-U_0):\nabla P_0 \;dxdtd\theta\\
			+\int_{0}^1\int_{0}^T\int_{D}&\Big(\partial_2A_2^{(\epsilon)}(U_0+\theta(U_\epsilon-U_0),\nabla U_0+\theta\nabla(U_\epsilon-U_0))\\&\qquad\qquad\qquad\qquad\qquad\qquad-\partial_2A_2^{(\epsilon)}(U_0,\nabla U_0) \Big)(\nabla(U_\epsilon-U_0)):\nabla P_0 \;dxdtd\theta.
		\end{aligned}
	\end{equation*}
	Doing the same computations as before with $	R_{1}^{A_2}(U_0,P_0)=\lim_{\epsilon \to 0^+}\frac{1}{\vert\omega_\epsilon\vert}	R_{1,\epsilon}^{A_2}$, we then get
	\begin{align*}
		R_{1}^{A_2}(U_0,P_0)&=\frac{1}{\vert\omega\vert}\int_{0}^T\int_{\mathbb{R}^2}\left(A_2^{\omega}(U_0(x_0),\nabla U_0(x_0)+\nabla K)-A_2^{\omega}(U_0(x_0),\nabla U_0(x_0)) \right):\nabla P_0(x_0)  \;dxdt\\
		&-\frac{1}{\vert\omega\vert}\int_{0}^T\int_{\mathbb{R}^2}\partial_2A_2^{\omega}(U_0(x_0),\nabla U_0(x_0))(\nabla K):\nabla P_0(x_0) \;dxdt
	\end{align*}
	\begin{align*}
		R_{1}^{A_2}(U_0,P_0)=0
	\end{align*}
	\begin{align*}
		R_{2,\epsilon}^{A_2}&=\big( \partial_U a_2(\epsilon,U_0,P_0)-\partial_U a_2(0,U_0,P_0) \big)(U_\epsilon-U_0)\\
		&=\int_{0}^T\int_{D}\left(\partial_1A_2^{(\epsilon)}(U_0,\nabla U_0)-\partial_1A_2^{(0)}(U_0,\nabla U_0) \right)(U_\epsilon-U_0):\nabla P_0\;dxdt\\
		&+\int_{0}^T\int_{D}\left(\partial_2A_2^{(\epsilon)}(U_0,\nabla U_0)-\partial_2A_2^{(0)}(U_0,\nabla U_0))\right)(\nabla(U_\epsilon-U_0)):\nabla P_0 \;dxdt
	\end{align*}
	
	\begin{align*}
		R_{2}^{A_2}(U_0,P_0)=-\frac{1}{\vert\omega\vert}\int_{0}^T\int_{\omega}\left[(\partial_2A_2^{int}-\partial_2A_2^{ext})(U_0(x_0),\nabla U_0(x_0))\right](\nabla K):\nabla P_0(x_0)\; dxdt
	\end{align*}
	\begin{align*}
		R_{2}^{A_2}(U_0,P_0)=-\frac{1}{\vert\omega\vert}\;\int_0^T\int_{\omega}Q(\alpha)\nabla K:\nabla P_0 \;dxdt=
		-\frac{1}{\vert\omega\vert}\int_0^T\int_{\omega}Q(\alpha)\nabla U_0:\nabla P_0 \;dxdt,		
	\end{align*}
	\begin{align*}
		\partial_\ell\mathcal{L}^{A_2}(0,U_0,P_0)&=\lim_{\epsilon \to 0^+}\frac{1}{\vert\omega_\epsilon\vert}\left(a_2(\epsilon,U_0,P_0)-a_2(0,U_0,P_0)\right)\\&=\lim_{\epsilon \to 0^+}\frac{1}{\vert\omega_\epsilon\vert}\int_{0}^T\int_{D}\left(A_2^{(\epsilon)}(U_0,\nabla U_0)-A_2^{(0)}( U_0,\nabla U_0)\right):\nabla P_0 \;dxdt.
	\end{align*}
	Using a change of variables
	\begin{align*}
		\partial_\ell\mathcal{L}^{A_2}(0,U_0,P_0)	&=-\frac{1}{\vert\omega\vert}\int_{0}^T\int_{\omega}\left[A_2^{int}(U_0(x_0),\nabla U_0(x_0))-A_2^{ext}(U_0(x_0),\nabla U_0(x_0))\right]:\nabla P_0(x_0)\;dxdt
		\\[0.2cm] &=-\int_{0}^T\left[A_2^{int}(U_0(x_0),\nabla U_0(x_0))-A_2^{ext}(U_0(x_0),\nabla U_0(x_0))\right]:\nabla P_0(x_0)\;dt
	\end{align*}
	\begin{eqnarray*}
		\partial_\ell\mathcal{L}^{A_2}(0,U_0,P_0)=-\int_{0}^T\left[ Q(\alpha)\nabla U_0:\nabla P_0\right](x_0,t)\;dt.
	\end{eqnarray*}	
	For the objective function we have:
	\begin{eqnarray*}
		\partial_U\mathcal{J}(\epsilon,U_0)=\partial_U J(\Omega_\epsilon,U_0,\nabla U_0)(\psi)=\int_{D}\partial_1 j^{(\epsilon)}(U_0,\nabla U_0)\psi+\partial_2 j^{(\epsilon)}(U_0,\nabla U_0)\nabla\psi
	\end{eqnarray*}
	\begin{align*}
		R_{1,\epsilon}^J &=\int_{0}^1\left[\partial_U\mathcal{J}(\epsilon,U_0+\theta(U_\epsilon-U_0))-\partial_U\mathcal{J}(\epsilon,U_0) \right](U_\epsilon-U_0)\;d\theta\\
		&=\int_{0}^1\left[\partial_U J(\Omega_\epsilon,U_0+\theta(U_\epsilon-U_0),\nabla U_0+\theta\nabla(U_\epsilon-U_0))-\partial_U J(\Omega_\epsilon,U_0,\nabla U_0) \right](U_\epsilon-U_0)\;d\theta\\
		&=\int_{0}^1\int_{0}^T\int_{D}\left[\partial_1 j^{(\epsilon)}(U_0+\theta(U_\epsilon-U_0),\nabla U_0+\theta\nabla(U_\epsilon-U_0))-\partial_1 j^{(\epsilon)}(U_0,\nabla U_0) \right](U_\epsilon-U_0) \;dxdtd\theta\\
		&=\int_{0}^1\int_{0}^T\int_{D}\left[\partial_2 j^{(\epsilon)}(U_0+\theta(U_\epsilon-U_0),\nabla U_0+\theta\nabla(U_\epsilon-U_0))-\partial_2 j^{(\epsilon)}(U_0,\nabla U_0) \right](\nabla(U_\epsilon-U_0)) \;dxdtd\theta.
	\end{align*}
	A change of variables using $(\nabla\varphi)\circ T_\epsilon=\frac{1}{\epsilon}\nabla(\varphi\circ T_\epsilon)$ and $K_\epsilon=\frac{(U_\epsilon-U_0)\circ T_\epsilon}{\epsilon}$ gives
	\begin{equation*}
		\begin{aligned}
			R_{1,\epsilon}^J =\epsilon^{2}\int_{0}^1\int_{0}^T\int_{D_\epsilon}&\Big[\partial_1 j^{(\epsilon)}(U_0\circ T_\epsilon+\theta\epsilon K_\epsilon,(\nabla U_0)\circ T_\epsilon+\theta\nabla K\epsilon)\\&\qquad\qquad\qquad\qquad-\partial_1 j^{(\epsilon)}(U_0\circ T_\epsilon,(\nabla U_0)\circ T_\epsilon) \Big](\epsilon K_\epsilon)\; dxdtd\theta\\
			=\epsilon^{2}\int_{0}^1\int_{0}^T\int_{D_\epsilon}&\Big[\partial_2 j^{(\epsilon)}(U_0\circ T_\epsilon+\theta\epsilon K_\epsilon,(\nabla U_0)\circ T_\epsilon+\theta\nabla K_\epsilon)\\&\qquad\qquad\qquad\qquad-\partial_2 j^{(\epsilon)}(U_0\circ T_\epsilon,(\nabla U_0)\circ T_\epsilon) \Big](\nabla K_\epsilon) \;dxdtd\theta
		\end{aligned}
	\end{equation*}
	$R_1^{J}(U_0,P_0)=\lim_{\epsilon \to 0^+}\frac{1}{\vert\omega_\epsilon\vert}R_{1,\epsilon}^J $
	\begin{align*}
		R_1^{J}(U_0,P_0)&=\frac{1}{\vert\omega\vert}\int_{0}^1\int_{0}^T\int_{\mathbb{R}^2}\left[\partial_2 j^{\omega}(U_0(x_0),\nabla U_0(x_0)+\theta\nabla K)-\partial_2 j^{\omega}(U_0(x_0),\nabla U_0(x_0)) \right](\nabla K) \; dxdtd\theta\\
		&=\frac{1}{\vert\omega\vert}\int_{0}^T\int_{\mathbb{R}^2}\left[j^{\omega}(U_0(x_0),\nabla U_0(x_0)+\nabla K)-j^{\omega}(U_0(x_0),\nabla U_0(x_0)) \right]  \;dxdt\\
		&-\frac{1}{\vert\omega\vert}\int_{0}^T\int_{\mathbb{R}^2}\partial_2 j^{\omega}(U_0(x_0),\nabla U_0(x_0))(\nabla K) \;dxdt
	\end{align*}
	with $j^{\omega}(y_1,y_2)=\chi_\omega j^{int}(y_1,y_2)+\chi_{\mathbb{R}^2\backslash\omega}j^{ext}(y_1,y_2)$
	\begin{eqnarray*}
		R_1^{J}(U_0,P_0)&= 0
	\end{eqnarray*}
	\begin{align*}
		R_{2,\epsilon}^J&= \left[\partial_U\mathcal{J}(\epsilon,U_0)-\partial_U\mathcal{J}(0,U_0)\right](U_\epsilon-U_0)\\
		&=\int_{0}^T\int_{D}\left(\partial_1 j^{(\epsilon)}(U_0,\nabla U_0)-\partial_1 j^{(0)}(U_0,\nabla U_0)\right)(U_\epsilon-U_0) \;dxdt\\&+\int_{0}^T\int_{D}\left(\partial_2 j^{(\epsilon)}(U_0,\nabla U_0)-\partial_2 j^{(0)}(U_0,\nabla U_0)\right)(\nabla(U_\epsilon-U_0)) \;dxdt.
	\end{align*}
	Using a change of variables we get
	\begin{align*}
		R_{2,\epsilon}^J&=\epsilon^{2}\int_{0}^T\int_{D_\epsilon}\left(\partial_1 j^{(\epsilon)}(U_0\circ T_\epsilon,(\nabla U_0)\circ T_\epsilon)-\partial_1 j^{(0)}(U_0\circ T_\epsilon,(\nabla U_0)\circ T_\epsilon)\right)(\epsilon K_\epsilon)\;dxdt\\
		&+\epsilon^{2}\int_{0}^T\int_{D_\epsilon}\left(\partial_2 j^{(\epsilon)}(U_0\circ T_\epsilon,(\nabla U_0)\circ T_\epsilon)-\partial_2 j^{(0)}(U_0\circ T_\epsilon,(\nabla U_0)\circ T_\epsilon)\right)(\nabla K_\epsilon) \;dxdt\\
		&=\epsilon^{2}\int_{0}^T\int_{\omega}\left(\partial_1 j^{(\epsilon)}(U_0\circ T_\epsilon,(\nabla U_0)\circ T_\epsilon)-\partial_1 j^{(0)}(U_0\circ T_\epsilon,(\nabla U_0)\circ T_\epsilon)\right)(\epsilon K_\epsilon) \;dxdt\\
		&+\epsilon^{2}\int_{0}^T\int_{\omega}\left(\partial_2 j^{(\epsilon)}(U_0\circ T_\epsilon,(\nabla U_0)\circ T_\epsilon)-\partial_2 j^{(0)}(U_0\circ T_\epsilon,(\nabla U_0)\circ T_\epsilon)\right)(\nabla K_\epsilon) \;dxdt
	\end{align*}
	$R_2^{J}(U_0,P_0)=\lim_{\epsilon \to 0^+}\frac{1}{\vert\omega_\epsilon\vert}R_{2,\epsilon}^J $
	\begin{align*}
		R_2^{J}(U_0,P_0)&=\frac{1}{\vert\omega\vert}\int_{0}^T\int_{\omega}\left[\partial_2 j^{int}(U_0(x_0),\nabla U_0(x_0))-\partial_2 j^{ext}(U_0(x_0),\nabla U_0(x_0)) \right](\nabla K) \;dxdt
	\end{align*}
	\begin{eqnarray*}
		R_2^{J}(U_0,P_0)=0
	\end{eqnarray*}
	\begin{align*}
		\partial_\ell\mathcal{L}^J(0,U_0,P_0)&=\lim_{\epsilon \to 0^+}\frac{1}{\vert\omega_\epsilon\vert}\left[\mathcal{J}(\epsilon,U_0)-\mathcal{J}(0,U_0)\right]\\
		&=\lim_{\epsilon \to 0^+}\frac{1}{\vert\omega_\epsilon\vert}\int_{0}^T\int_{D}\left(j^{(\epsilon)}(U_0,\nabla U_0)-j^{(0)}(U_0,\nabla U_0)\right)  \;dxdt.\\
	\end{align*}
	Now we can write 
	\begin{align*}
		&\int_{0}^T\int_{D}\left(j^{(\epsilon)}(U_0,\nabla U_0)-j^{(0)}(U_0,\nabla U_0)\right) \;dxdt=\\
		&\int_{0}^T\int_{D}\left(\chi_{\Omega_\epsilon} j^{int}(U_0,\nabla U_0)+\chi_{D\backslash\Omega_\epsilon}j^{ext}(U_0,\nabla U_0)-\chi_{\Omega} j^{int}(U_0,\nabla U_0)-\chi_{D\backslash\Omega} j^{ext}(U_0,\nabla U_0)\right)  \;dxdt\\
		&=-\int_{0}^T\int_{\omega_\epsilon}\left(j^{int}(U_0,\nabla U_0)-j^{ext}(U_0,\nabla U_0) \right) \;dxdt.
	\end{align*}
	Therefore
	\begin{align*}
		\partial_\ell\mathcal{L}^J(0,U_0,P_0)&=-\lim_{\epsilon \to 0^+}\frac{1}{\vert\omega_\epsilon\vert}\int_{0}^T\int_{\omega_\epsilon}\left(j^{int}(U_0,\nabla U_0)-j^{ext}(U_0,\nabla U_0) \right) \;dxdt.
	\end{align*}
	Using a change of variables we obtain
	\begin{align*}
		\partial_\ell\mathcal{L}^J(0,U_0,P_0)&=-\lim_{\epsilon \to 0^+}\frac{\epsilon^2}{\vert\omega_\epsilon\vert}\int_{0}^T\int_{\omega}\left(j^{int}(U_0\circ T_\epsilon,(\nabla U_0)\circ T_\epsilon)-j^{ext}(U_0\circ T_\epsilon,(\nabla U_0)\circ T_\epsilon) \right) \;dxdt\\
		&=-\frac{1}{\vert\omega\vert}\int_{0}^T\int_{\omega}\left(j^{int}(U_0(x_0),\nabla U_0)(x_0)-j^{ext}(U_0(x_0),\nabla U_0(x_0)) \right) \;dxdt\\[0.2cm]
		&=-\int_{0}^Tj^{int}(U_0(x_0),\nabla U_0)(x_0))-j^{ext}(U_0(x_0),\nabla U_0(x_0)) \;dt.
	\end{align*}
	So we have well
	\begin{eqnarray*}
		\partial_\ell\mathcal{L}^J(0,U_0,P_0)=-\int_{0}^T\vert \nabla U_0\vert^2\;dt- \int_{0}^T\left(U_0(x_0,t)-U_d(x_0,t)\right)^2\;dt.
	\end{eqnarray*}
	In summary we have
	\begin{eqnarray*}
		&\,&R_1(U_0,P_0)=R_1^{A_1}(U_0,P_0)+R_1^{A_2}(U_0,P_0)+R_1^{J}(U_0,P_0)\\
		&\,&R_2(U_0,P_0)=R_2^{A_1}(U_0,P_0)+R_2^{A_2}(U_0,P_0)+R_2^{J}(U_0,P_0)\\
		&\,&\partial_\ell\mathcal{L}(0,U_0,P_0)=\partial_\ell\mathcal{L}^{A_1}(0,U_0,P_0)+\partial_\ell\mathcal{L}^{A_2}(0,U_0,P_0)+\partial_\ell\mathcal{L}^J(0,U_0,P_0).
	\end{eqnarray*}	
	Therefore
	\begin{align*}
		R_{1}(U_0,P_0)&=\frac{1}{\vert\omega\vert}\int_{0}^T\int_{\mathbb{R}^2}\left(A_1^{\omega}(U_0(x_0),\nabla U_0(x_0)+\nabla K)- A_1^{\omega}(U_0(x_0),\nabla U_0(x_0)) \right)\cdot(P_0(x_0))  \;dxdt\\
		&-\frac{1}{\vert\omega\vert}\int_{0}^T\int_{\mathbb{R}^2}\partial_2 A_1^{\omega}(U_0(x_0),\nabla U_0(x_0))(\nabla K)\cdot(P_0(x_0))  \;dxdt.
	\end{align*}
	
	\begin{align*}
		R_{2}(U_0,P_0)&=-\frac{1}{\vert\omega\vert}\int_{0}^T\int_{\omega}\left[\partial_2 A_1^{int}(U_0(x_0),\nabla U_0(x_0))\right](\nabla K)\cdot(P_0(x_0))  \;dxdt
		\\&\quad-\frac{1}{\vert\omega\vert}\int_0^T\int_{\omega}Q(\alpha)\nabla U_0:\nabla P_0 \;dxdt.
	\end{align*}
	\begin{align*}
		\partial_\ell\mathcal{L}(0,U_0,P_0)&=-\int_{0}^T\left[\partial_t(U_0)\cdot P_0+(\nabla\cdot(F(U_0)\cdot P_0-S(U_0)\cdot P_0\right](x_0,t)\;dt\\&\quad-\int_{0}^T\left[ Q(\alpha)\nabla U_0:\nabla P_0\right](x_0,t)\;dt-\int_{0}^T\vert \nabla U_0\vert^2(x_0,t)\;dt- \int_{0}^T\left((U_0-U_d)(x_0,t)\right)^2\;dt.	
	\end{align*}
	Ultimately, by combining all the above terms, we obtain the following result, giving the topological derivative of the objective function $J$ in $\Omega$ at point $x_0$ : 
	\begin{align*}
		D_TJ(\Omega)(x_0)&=\frac{1}{\vert\omega\vert}\int_{0}^T\int_{\mathbb{R}^2}\left(A_1^{\omega}(U_0(x_0),\nabla U_0(x_0)+\nabla K)- A_1^{\omega}(U_0(x_0),\nabla U_0(x_0)) \right)\cdot(P_0(x_0))  \;dxdt\\
		&-\frac{1}{\vert\omega\vert}\int_{0}^T\int_{\mathbb{R}^2}\partial_2 A_1^{\omega}(U_0(x_0),\nabla U_0(x_0))(\nabla K)\cdot(P_0(x_0))  \;dxdt\\
		&-\frac{1}{\vert\omega\vert}\int_{0}^T\int_{\omega}\left[\partial_2 A_1^{int}(U_0(x_0),\nabla U_0(x_0))\right](\nabla K)\cdot(P_0(x_0))  \;dxdt
		\\&-\frac{1}{\vert\omega\vert}\int_0^T\int_{\omega}Q(\alpha)\nabla U_0:\nabla P_0 \;dxdt\\&
		-\int_{0}^T\left[\partial_t(U_0)\cdot P_0+(\nabla\cdot(F(U_0)\cdot P_0-S(U_0)\cdot P_0\right](x_0,t)\;dt\\&-\int_{0}^T\left[ Q(\alpha)\nabla U_0:\nabla P_0\right](x_0,t)\;dt-\int_{0}^T\vert \nabla U_0\vert^2(x_0,t)\;dt- \int_{0}^T\left((U_0-U_d)(x_0,t)\right)^2\;dt.	
	\end{align*}
	This concludes the proof of the Theorem \ref{maintheogang}.
\end{proof}

\subsection*{ACKNOWLEDGEMENT}
\noindent 
This work has been supported by the Deutsche Forschungsgemeinschaft within the Priority program SPP 1962 "Non-smooth and Complementaritybased Distributed Parameter Systems: Simulation and Hierarchical Optimization". The authors would like to thank Volker Schulz ( University Trier, Trier, Germany) and Luka Schlegel (University Trier, Trier, Germany) for helpful and interesting discussions within the project Shape Optimization Mitigating Coastal Erosion (SOMICE).

	\end{document}